\documentclass{article}
\usepackage{dsfont}
\usepackage{pgfplots}
\pgfplotsset{compat=1.14}
\usepackage{calc}

\author{Dor Elboim, Ofir Gorodetsky} 

\newcommand{\Addresses}{{
  \bigskip
  \footnotesize

  \textsc{Department of Mathematics, Princeton University, Princeton, NJ 08544, USA}\par\nopagebreak
  \textit{E-mail address:} \texttt{delboim@math.princeton.edu}

  \medskip

	\textsc{Mathematical Institute, University of Oxford, Oxford, OX2 6GG, UK}\par\nopagebreak
\textit{E-mail address:} \texttt{ofir.goro@gmail.com}
}}

\title{Uniform estimates for almost primes over finite fields}
\date{}
\usepackage[margin=1in]{geometry}
\usepackage[all]{xy}
\usepackage{amssymb}
\usepackage{amsfonts}
\usepackage{amsthm}
\usepackage{amsmath}
\usepackage{hyperref}
\usepackage{mathtools}
\usepackage{url}
\usepackage{xcolor}
\usepackage{pgfplots}

\newtheorem{thm}{Theorem}[section]

\newtheorem{lem}[thm]{Lemma}  
\newtheorem{prop}[thm]{Proposition}
\newtheorem{cor}[thm]{Corollary}

\newtheorem*{definition*}{Definition}
 
\newtheorem{remark}[thm]{Remark}

\newcommand{\PP}{\mathbb{P}}
\newcommand{\CC}{\mathbb{C}}
\newcommand{\ZZ}{\mathbb{Z}}
\newcommand{\FF}{\mathbb{F}}
\newcommand{\EE}{\mathbb{E}}

\newcommand{\re}{\Re}
\newcommand{\Mnq}{\mathcal{M}_{n,q}}
\newcommand{\Fr}{\mathrm{Fr}}
\newcommand{\Po}{\mathrm{Po}}
\newcommand{\Var}{\mathrm{Var}}

\mathtoolsset{showonlyrefs}

\numberwithin{equation}{section}

\begin{document}

\maketitle
\begin{abstract}
We establish a new asymptotic formula for the number of polynomials of degree $n$ with $k$ prime factors over a finite field $\FF_q$. The error term tends to $0$ uniformly in $n$ and in $q$. Previously, asymptotic formulas were known either for fixed $q$, through the works of Warlimont and Hwang, or for small $k$, through the work of Arratia, Barbour and Tavar\'e.

As an application, we estimate the total variation distance between the number of cycles in a random permutation on $n$ elements and the number of prime factors of a random polynomial of degree $n$ over $\FF_q$. The distance tends to $0$ at rate $1/(q\sqrt{\log n})$. Previously this was only understood when either $q$ is fixed and $n$ tends to $\infty$, or $n$ is fixed and $q$ tends to $\infty$, by results of Arratia, Barbour and Tavar\'{e}.
\end{abstract}
\section{Introduction}
Given a positive integer $n$, we let $\pi_n$ be a permutation chosen uniformly at random from $S_n$. Given a prime power $q$, we let $f_n=f_{n,q} \in \FF_q[T]$ be a polynomial chosen uniformly at random from  $\Mnq\subseteq \FF_q[T]$, the set of monic polynomials of degree $n$ over the finite field $\FF_q$.

We denote by $\Omega(f)$ the number of monic prime factors dividing a polynomial $f$, counted with multiplicity, and by $K(\pi)$ the number of cycles in a permutation $\pi$. We define the following function:
	\begin{equation}\label{eq:hq}
	h_q(x) :=  \prod_{P \in \mathcal{P}} \left( 1-\frac{x}{|P|}\right)^{-1} \left(1-\frac{1}{|P|}\right)^{x},
	\end{equation}
where $\mathcal{P}=\mathcal{P}_{q}$ is the set of monic irreducible polynomials over $\FF_q$ and $|f|=q^{\deg (f)}$. Note that $h_q(x)$ blows up when $x \to q^{-}$. Our main result, Theorem~\ref{thm:pointwise} below, compares $\PP(\Omega(f_n)=k)$ with $\PP(K(\pi_n)=k)$. Throughout the paper, $n \ge 2$, $1 \le k \le n$ and
\begin{equation}
r:= \frac{k-1}{\log n}.
\end{equation}
Unless stated otherwise, constants, both implied and explicit, are absolute. As Theorem~\ref{thm:pointwise} is somewhat technical, we first state two corollaries. As $n \to \infty$, both $K(\pi_n)$ and $\Omega(f_n)$ become concentrated around their mean, which is $\log n+O(1)$. The next corollary shows that the ratio of $\PP(\Omega (f_{n})=k)$ and $\PP(K (\pi_{n})=k )$ is asymptotic to $h_q(r)$, in the most general limit $q^n \to \infty$, for $k$ as large as $C \log n$ for an explicit $C>1$.
\begin{cor}\label{cor:pointwise}
	For $r \le 3/2$ we have
	\begin{equation}\label{eq:pointwise}
	\left| \frac{\PP \left(\Omega (f_{n})=k \right)}{\PP (K(\pi _n)=k)} -   h_q( r ) \right| \le \frac{Ck}{q(\log n)^2}, \qquad q^n \to \infty.
	\end{equation}
\end{cor}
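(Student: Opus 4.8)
The plan is to derive an exact power‑series identity for $\PP(\Omega(f_n)=k)$, match it term‑by‑term against the generating identity $\PP(K(\pi_n)=k)=[x^k]\binom{x+n-1}{n}$ for permutations, and extract $h_q(r)$ as the limit of a ratio of Taylor coefficients, tracking the errors carefully.

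\emph{Generating functions.} Starting from $\prod_{P\in\mathcal{P}}(1-xu^{\deg P})^{-1}=\sum_{N\ge 0}u^{N}\sum_{f\in\mathcal{M}_{N,q}}x^{\Omega(f)}$ and the Euler product $\prod_P(1-u^{\deg P})^{-1}=(1-qu)^{-1}$, I factor out the dominant singularity,
\[
\prod_P(1-xu^{\deg P})^{-1}=(1-qu)^{-x}\,C_q(u,x),\qquad C_q(u,x):=\prod_P\frac{(1-u^{\deg P})^{x}}{1-xu^{\deg P}}.
\]
With $v=qu$ and $\widetilde C_q(v,x):=C_q(v/q,x)=\prod_P(1-\tfrac{xv^{\deg P}}{|P|})^{-1}(1-\tfrac{v^{\deg P}}{|P|})^{x}$ one has $\widetilde C_q(1,x)=h_q(x)$, and for $|x|\le 3/2$ the product $\widetilde C_q(v,x)$ converges on a disc containing $|v|\le 6/5$ for every $q\ge 2$. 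Taking logarithms, the linear terms in $v^{\deg P}/|P|$ cancel, so $\log\widetilde C_q(v,x)=\tfrac{x(x-1)}{2q}v^{2}+O(1/q^{2})$ uniformly on $|v|\le 6/5$, $|x|\le 3/2$ (and simply $\log\widetilde C_q=O(1)$ there for every $q\ge 2$). Hence, writing $\widetilde C_q(v,x)=\sum_{m\ge 0}\gamma_m(x)v^m$, we get $\gamma_0\equiv 1$, $\gamma_1\equiv 0$, $\gamma_2(x)=\tfrac{x(x-1)}{2q}$, $\gamma_m(0)=0$ for $m\ge 1$, and $|\gamma_m(x)|\ll (5/6)^m$ on $|x|\le 3/2$, with an extra $1/q^2$ for $m\ge 3$ when $q$ is large. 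Extracting $[u^n]$ and using $[u^{N}](1-qu)^{-x}=q^{N}\binom{N+x-1}{N}$ yields the exact identity
\[
\PP(\Omega(f_n)=k)=\sum_{m=0}^{n}[x^k]\!\left(\gamma_m(x)\binom{n-m+x-1}{n-m}\right).
\]

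\emph{Comparison with permutations and collapse to $h_q(r)$.} Since $\binom{N+x-1}{N}=\sum_j\PP(K(\pi_N)=j)x^j$, one has $[x^k](x^{a}\binom{n-m+x-1}{n-m})=\PP(K(\pi_{n-m})=k-a)$; in particular the $m=0$ term is exactly $\PP(K(\pi_n)=k)$. Dividing the identity above by $\PP(K(\pi_n)=k)$, writing $\gamma_m(x)=\sum_a\gamma_{m,a}x^a$, and using $\gamma_{m,0}=\gamma_m(0)=0$ for $m\ge 1$,
\[
\frac{\PP(\Omega(f_n)=k)}{\PP(K(\pi_n)=k)}=1+\sum_{m\ge 2}\sum_{a\ge 1}\gamma_{m,a}\,\rho_{m,a},\qquad \rho_{m,a}:=\frac{\PP(K(\pi_{n-m})=k-a)}{\PP(K(\pi_n)=k)}.
\]
The analytic input is the uniform comparison $\rho_{m,a}=r^{a}+O\!\big(r^{a-1}(a^{2}+m)/\log n\big)+O(m/n)$ for $1\le a\le m$ and $r\le 3/2$, which I would deduce from the asymptotics of the Stirling numbers of the first kind (Moser--Wyman; equivalently by analysing $\log\binom{N+x-1}{N}-K\log x$ near its saddle point at $x\approx r$), in the sharp form $\PP(K(\pi_N)=j-1)/\PP(K(\pi_N)=j)=\tfrac{j-1}{\log N}\big(1+O(1/\log N)\big)$ valid uniformly for $j-1\le\tfrac32\log N+O(1)$. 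Feeding in the main term $\rho_{m,a}\approx r^a$ and summing, $1+\sum_{m\ge 2}\sum_a\gamma_{m,a}r^a=\sum_{m\ge 0}\gamma_m(r)=\widetilde C_q(1,r)=h_q(r)$, so the difference of the two sides of the Corollary equals $\sum_{m\ge 2}\sum_{a\ge 1}\gamma_{m,a}(\rho_{m,a}-r^a)$.

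\emph{Error bookkeeping and the main obstacle.} The dominant term is $m=2$: since $\gamma_{2}(x)=\tfrac{x(x-1)}{2q}$, its contribution is $\tfrac1{2q}(\rho_{2,2}-\rho_{2,1})$, and the heart of the matter is to show $(\rho_{2,2}-\rho_{2,1})-(r^2-r)=O(r/\log n)=O(k/(\log n)^2)$. Individually $\rho_{2,1}-r$ and $\rho_{2,2}-r^2$ are only of size $r/\log n$; it is the \emph{cancellation} between them — forced by the leading $1/\log n$‑corrections in the Stirling‑ratio estimate — that produces the sharp bound, and one must check in addition that the resulting $q^{-1}$‑term of the ratio reproduces, \emph{on the nose}, the term $\tfrac{r(r-1)}{2q}$ in the expansion $h_q(r)=1+\tfrac{r(r-1)}{2q}+O(1/q^2)$. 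The terms $m\ge 3$ carry a factor $1/q^2$ (for $q$ large) or decay geometrically (for $q$ bounded), and together with the $O(m/n)$ localisation errors contribute at most $O(r/(q\log n))$; since $r\log n=k-1<k$, the whole error is $O(k/(q(\log n)^2))$. The case $k=1$ ($r=0$) is checked directly: there the ratio is $1+O(q^{-n/2})$ by the prime polynomial theorem, which lies within the claimed bound for all $n\ge 2$, $q\ge 2$. Beyond this bookkeeping, the remaining difficulty is keeping everything uniform in the two opposite regimes $q\to\infty$ (with $n$ possibly tiny) and $q$ bounded (with $n\to\infty$): the hypothesis $r\le 3/2$ keeps $|x|=3/2<q$ away from the pole of $h_q$ at $x=q$ and allows the contour $|v|=6/5$ to be used for all $q\ge 2$, while $\log n\le n$ absorbs the $O(m/n)$ errors.
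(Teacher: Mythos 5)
Your proposal takes a genuinely different route from the paper, but it leaves the hard part unproved, and you say so yourself. The paper derives Corollary~\ref{cor:pointwise} in three lines from the much stronger Theorem~\ref{thm:pointwise}: take $\delta=1/5$, use $\PP(X=k-1)=O(\PP(K(\pi_n)=k))$ for $k\le\tfrac32\log n$ from Hwang's estimate \eqref{eq:omega perm intro}, and handle small $n$ separately via \eqref{eq:q lim}. All the analytic work lives in the proof of Theorem~\ref{thm:pointwise}, which is done by double contour integration. Your proposal instead attacks the Corollary directly via exact coefficient extraction, writing $\PP(\Omega(f_n)=k)=\sum_{m}[x^k](\gamma_m(x)\binom{n-m+x-1}{n-m})$ and reducing to the Stirling-number ratios $\rho_{m,a}$. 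That is a coherent strategy, and the cancellation you single out as the ``heart of the matter'' is indeed the same cancellation the paper exploits: in the paper it appears as the vanishing of the main contour integral, equation~\eqref{eq:int 2}, $\int_\beta (z-r)n^{z-1}z^{-k}\,dz=0$, after replacing $\binom{n+z-1}{n}$ by $n^{z-1}/\Gamma(z)$ and $H_q(1,z)-H_q(1,r)$ by its linear Taylor term. Your approach would need to reproduce this in the language of Stirling asymptotics.

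The concrete gap is that the two analytic inputs you invoke are asserted, not established, and your stated forms are not quite strong enough. The estimate $\rho_{m,a}=r^a+O(r^{a-1}(a^2+m)/\log n)+O(m/n)$ gives, at $a=1$, an error $O(1/\log n)$ with no factor of $r$; fed into the $m=2$ term it yields an error $O(1/(q\log n))$, which exceeds the target $O(k/(q(\log n)^2))$ precisely when $k$ is bounded and $n\to\infty$. You acknowledge this implicitly (``individually $\rho_{2,1}-r$ and $\rho_{2,2}-r^2$ are only of size $r/\log n$''), but the sharper bound with the saved factor of $r$ is never derived, and it is exactly what the saddle-point/Stirling analysis must deliver. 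Likewise, the uniform two-term ratio $\PP(K(\pi_N)=j-1)/\PP(K(\pi_N)=j)=\tfrac{j-1}{\log N}(1+O(1/\log N))$ is stated as something you ``would deduce from Moser--Wyman,'' and the check that the $m=2$ contribution reproduces $\tfrac{r(r-1)}{2q}$ ``on the nose'' is flagged as something one ``must check.'' Without these, the proof is a sketch. The bookkeeping for $m\ge 3$ is also thinner than it needs to be: the claim that these terms contribute $O(r/(q\log n))$ requires both a factor of $r$ in each $\rho_{m,a}-r^a$ and a bound on $\sum_a|\gamma_{m,a}|r^{a-1}a^2$ that degenerates as $r\to 3/2$ unless one uses that $\gamma_m$ is a polynomial of bounded degree in $x$; none of that is spelled out. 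In short: right skeleton, but the decisive Stirling-ratio estimate and the cancellation it forces are exactly where the paper's contour-integral machinery (Lemmas~\ref{lem:binom and gamma} and \ref{lem:3integrals}, Proposition~\ref{prop:I integral}, and \eqref{eq:int 2}) does the real work, and your proposal does not replace it.
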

As we shall see in Lemma~\ref{lem:h size}, $h_q(r)  \ge c$, and so \eqref{eq:pointwise} gives an asymptotic result.

Both $K(\pi_n)$ and $\Omega(f_n)$ are supported on $[n]:=\{1,2,\ldots,n\}$. Denote by $\mu_{K,n}$ and $\mu_{\Omega,n}$ the distributions of $K(\pi_n)$ and $\Omega(f_n)$, which are measures on this set. Another corollary of our main result is an estimate for the total variation distance of the two measures.
\begin{cor}\label{cor:tv}
	As $q^n$ tends to infinity, we have
	\begin{equation}
	d_{\mathrm{TV}}(\mu_{K,n}, \mu_{\Omega,n}) := \frac{1}{2} \sum_{k \in [n]} \left| \PP(K(\pi_n)=k) - \PP(\Omega(f_n)=k) \right| = \Theta\left( \frac{1}{q\sqrt{\log n}} \right).
	\end{equation}
\end{cor}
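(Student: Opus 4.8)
The plan is to derive the total variation estimate from the pointwise comparison in Corollary~\ref{cor:pointwise}, splitting the sum over $k \in [n]$ according to the size of $r = (k-1)/\log n$.

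For the **upper bound**, I would write $d_{\mathrm{TV}} = \frac12 \sum_k |\PP(K(\pi_n)=k) - \PP(\Omega(f_n)=k)|$ and split the range of $k$ into three pieces: (i) the "bulk" region where $r$ is bounded (say $r \le 3/2$, or even $r$ in a window around $1$), (ii) a region of moderately large $k$, and (iii) the tail where $k$ is large. In the bulk region (i), Corollary~\ref{cor:pointwise} gives $|\PP(\Omega(f_n)=k) - h_q(r)\PP(K(\pi_n)=k)| \le Ck \PP(K(\pi_n)=k)/(q(\log n)^2)$, so I can bound the contribution by
\begin{equation}
\sum_{k} |h_q(r)-1|\,\PP(K(\pi_n)=k) + \frac{C}{q(\log n)^2}\sum_k k\,\PP(K(\pi_n)=k).
\end{equation}
The second sum is $O(\log n)$ since $\EE[K(\pi_n)] = \log n + O(1)$, giving a contribution $O(1/(q\log n))$, which is lower order. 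The first sum is the main term: since $h_q(x) = 1 + O_q(|x-1|)$ — more precisely $\log h_q(x)$ vanishes to first order behavior controlled by $1/q$ — and $K(\pi_n)$ is concentrated within $O(\sqrt{\log n})$ of its mean $\log n$ (so that $|r-1| = O(1/\sqrt{\log n})$ typically, with Gaussian-type tails by the Erdős–Kac / Goncharov theorem), one expects $\EE[|h_q(r)-1|] = O(1/(q\sqrt{\log n}))$. For regions (ii) and (iii), both probabilities $\PP(K(\pi_n)=k)$ and $\PP(\Omega(f_n)=k)$ decay super-polynomially (by standard large-deviation bounds for the number of cycles, respectively for $\Omega$), so their total mass — hence their difference — is $o(1/(q\sqrt{\log n}))$; one should verify that Theorem~\ref{thm:pointwise} (the unstated technical result) covers $r$ up to a constant larger than $1$ so that the crossover happens safely inside its range.

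For the **lower bound**, I would exploit the fact that $h_q(r) - 1$ has a definite sign for $r$ on one side of $1$ (from \eqref{eq:hq}, $h_q(x) > 1$ for $x \in (0,q)$, $x \neq$ special point, since each Euler factor $(1-x/|P|)^{-1}(1-1/|P|)^x \ge 1$ by convexity of $-\log(1-t)$). Thus $\PP(\Omega(f_n)=k) - \PP(K(\pi_n)=k) \approx (h_q(r)-1)\PP(K(\pi_n)=k) \ge 0$ throughout the bulk up to the error term, and restricting the TV sum to, say, $k$ with $|k - 1 - \log n| \le A\sqrt{\log n}$ for suitable constant $A$, the main term $\sum (h_q(r)-1)\PP(K(\pi_n)=k)$ is bounded below by a constant times $1/(q\sqrt{\log n})$ — because $h_q(r) - 1 \gg |r-1|/q \gg 1/(q\sqrt{\log n})$ on a set of $k$'s carrying constant probability mass under $\mu_{K,n}$ (those with $r - 1$ of order exactly $1/\sqrt{\log n}$), while the error $Ck/(q(\log n)^2) \ll 1/(q(\log n)^{3/2})$ is genuinely smaller. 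Some care is needed because TV takes absolute values and the main term could partially cancel the contribution from $r<1$ versus $r>1$, but since $h_q(r)-1 \ge 0$ on both sides this does not happen — the differences do not change sign in the bulk.

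The **main obstacle** I anticipate is the precise estimate $\EE[|h_q(r)-1|] \asymp 1/(q\sqrt{\log n})$: this requires (a) a good two-sided estimate $h_q(x) - 1 \asymp (x-1) \cdot g_q(x)$ near $x=1$ with $g_q(x) \asymp 1/q$ — presumably this is the content of the promised Lemma~\ref{lem:h size} (which the excerpt references) together with a derivative computation $h_q'(1) \asymp 1/q$ — and (b) matching upper and lower bounds on $\EE[|r-1|] = \EE[|K(\pi_n) - 1 - \log n|]/\log n \asymp \sqrt{\log n}/\log n = 1/\sqrt{\log n}$, which follows from the known fact that $\Var(K(\pi_n)) = \log n + O(1)$ together with a lower bound on the first absolute moment (e.g. via $\EE|X| \ge \Var(X)/\EE|X|$ bootstrapped, or directly from a local central limit theorem for $K(\pi_n)$). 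Assembling these and checking that the error terms in Corollary~\ref{cor:pointwise} and in the tail truncation are all $o(1/(q\sqrt{\log n}))$ — uniformly as $q^n \to \infty$, including the regime of large $q$ where $1/(q\sqrt{\log n})$ is itself small — is the delicate bookkeeping step.
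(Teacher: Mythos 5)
Your plan matches the paper's proof in all essentials: split the TV sum according to the size of $r=(k-1)/\log n$, use the pointwise estimate to reduce the bulk to $\sum_k|h_q(r)-1|\,\PP(K(\pi_n)=k)$ plus an acceptable error of size $O(1/(q\log n))$, exploit the Gaussian concentration of $K(\pi_n)$ together with $|h_q(r)-1|\asymp|r-1|/q$ near $r=1$ to get $\Theta(1/(q\sqrt{\log n}))$, kill the tails by large-deviation bounds, and obtain the lower bound from $k$ in a $\sqrt{\log n}$-window around $\log n$. The paper does exactly this, splitting into $I_1=[1,3\log n/2]$, $I_2=(3\log n/2,\sqrt{q}\log n]$, $I_3=(\sqrt{q}\log n,n]$.

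One factual error worth flagging: you assert $h_q(x)>1$ for $x\in(0,q)$, but the log of each Euler factor $g(x)=-\log(1-x/|P|)+x\log(1-1/|P|)$ is convex with $g(0)=g(1)=0$, hence $g(x)\le 0$ on $[0,1]$; so in fact $h_q(x)\le 1$ for $x\in[0,1]$ and $\ge 1$ for $x\ge 1$. Your argument survives because TV already takes absolute values termwise, so there is no cross-$k$ cancellation to worry about in the first place. But note that the paper's lower bound deliberately restricts to $k-\log n\in[1,\sqrt{\log n}]$ (i.e.\ $r>1$) precisely because Lemma~\ref{lem:h size} only supplies $h_q(x)\ge 1+(x-1)/(2q)$ for $x\ge 1$; if you insist on a symmetric window you would need to separately prove $1-h_q(x)\gg(1-x)/q$ for $x$ slightly below $1$, which is true (via $h_q'(1)\asymp 1/q$) but not recorded in the paper.

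Two further details you gloss over that the paper handles carefully. First, the ``super-polynomial decay'' in the moderate range $I_2$ is not free: the paper bounds $\PP(K(\pi_n)>3\log n/2)$ via Markov applied to $\EE 2^{K(\pi_n)}=n+1$, bounds $\EE K^6(\pi_n)$ similarly, and controls the Poisson tail factor $\PP(X=k-1)(r+1)^{Cr}$ by partitioning $I_2$ into blocks and invoking Lemma~\ref{lem:poiss}. Second, the statement must hold as $q^n\to\infty$, so the case of bounded $n$ and $q\to\infty$ needs its own argument: the paper gets the upper bound $O_n(1/q)$ from Remark~\ref{rem:q tv} and the matching lower bound from the single term $k=n$, where both probabilities are explicit and differ by $\gg 1/(q\,n!)$.
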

The main contribution to the total variation comes from values near $\log n$. As $h_q(1)=1$, if follows from Corollary~\ref{cor:pointwise} that $\PP \left(\Omega (f_{n})=k \right)$ and $\PP (K(\pi _n)=k)$ are close when $k$ is near $\log n$, which explains heuristically why the total variation tends to $0$ despite the correction factor $h_q(r)$. 

We now state the main result. Let $X=X_n$ be a Poisson random variable with mean $\log n$.
\begin{thm}\label{thm:pointwise}
	Fix $\delta \in (0,1)$. Suppose $n\ge 4(1-\delta)/\delta^2$ and $q\ge 1/(1-\delta)^{2}$. For $r \le q(1-\delta)$ we have
	\begin{equation}\label{eq:ineq}
	\left| \PP \left(\Omega (f_{n})=k \right) - \PP (K(\pi _n)=k)   h_q( r ) \right| \le C_{\delta}(r+1)^{C_{\delta}r} \PP(X=k-1)   \frac{k}{q(\log n)^2}.
	\end{equation}
\end{thm}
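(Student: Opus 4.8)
The plan is to run a generating-function/singularity analysis and then compare the result, coefficient by coefficient, with the permutation side. The engine is the identity from unique factorization in $\FF_q[T]$: summing over monic $f$, $\sum_f u^{\Omega(f)}z^{\deg f}=\prod_{P\in\mathcal P}(1-uz^{\deg P})^{-1}$. Since $|\Mnq|=q^n$ and $\prod_P(1-z^{\deg P})^{-1}=(1-qz)^{-1}$ is the zeta function of $\FF_q[T]$, I would replace $z$ by $z/q$ and divide out the main singularity to get
$$\EE\big[u^{\Omega(f_n)}\big]=[z^n]\Big((1-z)^{-u}\,G(z,u)\Big),\qquad G(z,u):=\prod_{P\in\mathcal P}\frac{(1-z^{\deg P}/|P|)^{u}}{1-uz^{\deg P}/|P|}.$$
Two properties of $G$ will do the work: $G(1,u)=h_q(u)$ by inspection, and — the coincidence tying the two models together — $[z^n](1-z)^{-u}=\binom{n-1+u}{n}=\EE[u^{K(\pi_n)}]=:Q_n(u)$. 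Under the hypothesis $|u|\le q(1-\delta)$, the nearest singularity of $G(\cdot,u)$, a pole coming from the degree-one primes at $z=q/u$, has $|z|\ge q/u\ge(1-\delta)^{-1}>1$, so $G(\cdot,u)$ is analytic on a fixed disc $|z|\le 1+c_\delta$; and from $\log G(z,u)=\sum_{m\ge 2}\frac{u^m-u}{m}\sum_{P}z^{m\deg P}|P|^{-m}$ (the terms linear in the logarithms cancel) I would read off $|\partial_z\log G(1,u)|\le C_\delta|u|^2/q$ and $\sup_{|z|=1+c_\delta}|G(z,u)|\le |h_q(u)|\exp(C_\delta|u|^2/q)$.

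Next I would transfer the singularity. Writing $G(z,u)=h_q(u)+(z-1)\widetilde G(z,u)$ with $\widetilde G$ analytic on $|z|\le 1+c_\delta$ and $\widetilde G(1,u)=\partial_z G(1,u)$, and using $(1-z)^{-u}(z-1)=-(1-z)^{1-u}$, a Hankel-contour estimate around $z=1$ (plus the circle $|z|=1+c_\delta$ for the remainder) should give, uniformly for $|u|\le q(1-\delta)$,
$$\EE\big[u^{\Omega(f_n)}\big]=h_q(u)\,Q_n(u)+R_n(u),\qquad |R_n(u)|\le C_\delta\,|h_q(u)|\,|Q_n(u)|\,\frac{|u|^3}{q\,n}.$$
Then I would extract $[u^k]$ by Cauchy's formula on $|u|=\rho$ for $\rho$ near $r$ (essentially the saddle of $Q_n(u)u^{-k-1}$, truncated to $\rho\le q(1-\delta/2)$ when $r$ is close to the boundary), and split $h_q(u)=h_q(r)+(h_q(u)-h_q(r))$:
$$\PP\big(\Omega(f_n)=k\big)=h_q(r)\PP\big(K(\pi_n)=k\big)+\frac{1}{2\pi i}\oint\frac{(h_q(u)-h_q(r))Q_n(u)}{u^{k+1}}du+\frac{1}{2\pi i}\oint\frac{R_n(u)}{u^{k+1}}du.$$
The third integral is $\ll_\delta\rho^3(qn)^{-1}h_q(r)\PP(K(\pi_n)=k)$, negligible against the target since $n$ is large. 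For the middle integral I would Taylor-expand $h_q$ about $r$ — legitimate since $h_q$ is analytic on $|u|<q$ and $q-r\ge q\delta$, so $|h_q^{(m)}(r)/m!|\le M_\delta\,h_q(r)(q\delta)^{-m}$ by Lemma~\ref{lem:h size} — reducing it to $\sum_{m\ge 1}\frac{h_q^{(m)}(r)}{m!}\,\Delta_m$ with $\Delta_m:=\sum_{i=0}^m\binom mi(-r)^{m-i}\PP(K(\pi_n)=k-i)$.

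Estimating $\Delta_m$, and showing that the $m=1$ term dominates, is the crux; here I would use the Feller coupling $K(\pi_n)\overset{d}{=}1+\sum_{j=1}^{n-1}B_j$ with independent $B_j\sim\mathrm{Bern}(1/(j+1))$, equivalently $Q_n(u)=u\prod_{j=1}^{n-1}\frac{u+j}{j+1}$. Comparing the elementary symmetric functions of $\{1/(j+1)\}_{j=1}^{n-1}$ with those of a Poisson law gives $\PP(K(\pi_n)=k-i)/\PP(K(\pi_n)=k)=r^i\big(1+O((i^2+ir)/\log n)\big)$ in the relevant range, whence $\Delta_1=\PP(K(\pi_n)=k)\cdot O((r+r^2)/\log n)$ and $\Delta_m$ is smaller than $\Delta_1$ by a further factor $\asymp((r+1)/\log n)^{m-1}$. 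Combining this with $|h_q'(r)|\le C_\delta\,h_q(r)\big(\tfrac{r}{q-r}+\tfrac1q\big)$ — the degree-one part of $(\log h_q)'(r)=\sum_P\big(\tfrac{1}{|P|-r}+\log(1-1/|P|)\big)$ being $\tfrac{r}{q-r}-O(1/q)$ — and with $\PP(K(\pi_n)=k)\le(r+1)^{Cr}\PP(X=k-1)$ and $h_q(r)\le(r+1)^{Cr}$ from Lemma~\ref{lem:h size}, summing over $m$ should produce exactly $C_\delta(r+1)^{C_\delta r}\PP(X=k-1)\tfrac{k}{q(\log n)^2}$, using $\tfrac{r}{q\log n}\le\tfrac{k}{q(\log n)^2}$.

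The hard part is this last step: the \emph{uniform} local estimates for $K(\pi_n)$ must have error terms that are simultaneously (i) valid for $k$ as large as $q(1-\delta)\log n$, so $r$ need not be bounded and the factors $r^i$ and the super-exponential weight $(r+1)^{Cr}$ have to be tracked honestly; (ii) of the precise shape $\PP(X=k-1)$ times a power of $\log n$; and (iii) sharp enough that the $m=1$ term genuinely controls the sum over $m$ — delicate in the range $r\in[q/2,q(1-\delta)]$, where one must exploit cancellation inside $\Delta_m=\sum_i\binom mi(-r)^{m-i}\PP(K=k-i)$ rather than bound it term by term. Secondary difficulties are keeping the $\delta$-dependent constants under control as $r\to q(1-\delta)$ (where both the $z$-disc of analyticity of $G$ and the room around the $u$-saddle shrink) and verifying the routine but fiddly bounds on $h_q,h_q',h_q''$ and on $G$ and $\partial_z G$.
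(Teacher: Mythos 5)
Your route is genuinely different from the paper's: you first do singularity analysis in the degree variable, extract the $k$-th coefficient of the resulting one-variable function of $u$, and Taylor-expand $h_q$ to \emph{all} orders, landing on finite differences $\Delta_m$ of Stirling-number probabilities. The paper instead keeps both Cauchy integrals alive, decomposes
\[
H_q(u,z)-H_q(1,r)=\bigl(H_q(u,z)-H_q(1,z)\bigr)+\bigl(H_q(1,z)-H_q(1,r)\bigr),
\]
absorbs the first bracket (which is $O(|u-1|)$) via Proposition~\ref{prop:I integral}, performs the $u$-integration exactly on the second bracket, and Taylor-expands $H_q(1,z)-H_q(1,r)$ only to second order in $z-r$. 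The linear term then vanishes \emph{identically}:
\[
\int_{\beta}\frac{(z-r)\,n^{z-1}}{z^{k}}\,dz=\frac{1}{n}\left(\frac{(\log n)^{k-2}}{(k-2)!}-r\,\frac{(\log n)^{k-1}}{(k-1)!}\right)=0
\]
exactly because $r=(k-1)/\log n$, i.e.\ the Poisson identity $\PP(X=k-2)=r\,\PP(X=k-1)$. Consequently only two $z$-derivatives of $H_q$ are needed (Lemma~\ref{lem:deriv2}), and no local limit theorem for $K(\pi_n)$ appears in the proof of Theorem~\ref{thm:pointwise} at all.

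Your version pushes the cancellation onto $K(\pi_n)$: your $\Delta_1=\PP(K(\pi_n)=k-1)-r\,\PP(K(\pi_n)=k)$ does \emph{not} vanish, and you then need uniform, cancellation-aware estimates for every $\Delta_m$, i.e.\ sharp finite-difference asymptotics for Stirling numbers with explicit dependence on $m$, $r$ and $\log n$. You flag this yourself as the crux and leave it open; it is at least as hard as the theorem. There are also concrete errors en route. First, the claimed bound $|h_q^{(m)}(r)|/m!\le M_\delta\,h_q(r)(q\delta)^{-m}$ cannot come from Lemma~\ref{lem:h size}, which gives only \emph{lower} bounds on $h_q$; a Cauchy estimate on the disc $|u-r|\le q\delta$ brings in $\sup_{|u-r|=q\delta}|h_q(u)|$, which for $r$ near $q(1-\delta)$ exceeds $h_q(r)$ by a factor of order $e^{cq}$, not $O_\delta(1)$. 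The paper avoids this by bounding $\log H_q$ and only its first two derivatives directly from the product. Second, when $r>q/2$ the Taylor series of $h_q$ at $r$ has radius of convergence at most $q-r<r$, so it cannot converge on the full circle $|u|=\rho\approx r$ over which you integrate; you would need a truncated or deformed contour with its own bookkeeping. Third, the pointwise bound $|R_n(u)|\le C_\delta|h_q(u)||Q_n(u)|\,|u|^3/(qn)$ is false near the zeros $u\in\{-1,\dots,-(n-1)\}$ of $Q_n(u)=\binom{n+u-1}{n}$: there $Q_n(u)=0$ while $R_n(u)=\EE\bigl[u^{\Omega(f_n)}\bigr]$ need not vanish. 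A singularity-transfer error term should be stated in absolute form (as the paper does via Lemma~\ref{lem:binom and gamma} and Lemma~\ref{lem:3integrals}), not multiplicatively in $Q_n$. The paper's two-variable decomposition and exact Poisson cancellation are precisely what sidestep all of these difficulties.
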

Our theorem reduces the asymptotic study of $\PP(\Omega(f_n)=k)$ to that of $\PP(K(\pi_n)=k)$, at least in a certain range (see Remark~\ref{rem:range} for a discussion of the range). By definition, $\PP(K(\pi_n)=k)=|s(n,k)|/n!$ where $s(n,k)$ are the Stirling numbers of the first kind. Asymptotics of these numbers were studied, in the entire range $1\le k\le n$, by Moser and Wyman \cite{moser1958}.
\begin{remark}\label{rem:range}
From the work of Moser and Wyman, one can show that $\PP(X=k-1) \le Ce^{Cr^2}\PP(K(\pi_n)=k)$,  so that Theorem~\ref{thm:pointwise} implies
\[
\left| \frac{\PP(\Omega(f_n)=k)}{\PP(K(\pi_n)=k)}- h_q(r) \right| \le C_{\delta}e^{C_{\delta}r^2}\frac{k}{q(\log n)^2}
\]
when $r \le q(1-\delta)$. Since $h_q(r) \ge 1$ for $r \ge 1$, it follows that we have an asymptotic result whenever $r \le c_{\delta} \sqrt{\log (q\log n)}$. However, we do not attempt to determine the widest range where $\PP(\Omega(f_n)=k)/\PP(K(\pi_n)=k) \sim h_q(r)$ holds, as the current result suffices for our corollaries.
\end{remark}

\subsection{Previous works on pointwise bounds}
Given a positive integer $n$, we denote by $\Omega(n)$ the number of its prime factors, counted with multiplicity. For a real number $x>1$, we denote by $N_x$ an integer chosen uniformly at random from $[1,x]\cap \ZZ$. Landau proved that \cite{landau1909}
\begin{equation}\label{eq:land}
 \PP(\Omega(N_x)=k)\sim \frac{1}{\log x} \frac{(\log \log x)^{k-1}}{(k-1)!}
\end{equation}
as $x \to \infty$, for any fixed $k \ge 1$. For $k=1$ this is the Prime Number Theorem. For $k$ growing with $x$, one has the following result, proved by Sathe \cite{sathe1}, whose proof was greatly simplified by Selberg \cite{selberg1954}. Fix $\delta \in (0,2)$. Uniformly for $x \ge 3$ and $1 \le k \le (2-\delta) \log \log x$, one has
\begin{equation}\label{eq:ss2}
\PP(\Omega(N_x)=k)= \frac{1}{\log x} \frac{(\log \log x)^{k-1}}{(k-1)!} \left(H\left( \frac{k-1}{\log \log x} \right) +O_{\delta}\left( \frac{k}{(\log \log x)^2} \right) \right)
\end{equation}
as $x \to \infty$, where
\begin{equation}
H(x):= \frac{1}{\Gamma(x+1)} \prod_{p \text{ prime}} \left( 1-\frac{x}{p}\right)^{-1} \left(1-\frac{1}{p}\right)^{x}.
\end{equation}
The proof is now a part of the general Selberg-Delange-Tenenbaum method, which is explained in detail in \cite[Ch.~II.5]{tenenbaum2015}.

Moser and Wyman \cite{moser1958} gave a simple asymptotic formula for $\PP(K(\pi_n)=k) = |s(n,k)|/n!$ in the range $k=o(\log n)$, and a more complicated one, involving some implicit constants, for the complimentary range. Since we are interested in the wider range $k=O(\log n)$, we state the following result of Hwang \cite{hwang1995}, proved by adapting the Selberg-Delange-Tenenbaum method:
\begin{equation}\label{eq:omega perm intro}
\PP\left( K(\pi_n) =k \right) = \frac{1}{n}  \frac{(\log n)^{k-1}}{(k-1)!} \frac{1}{\Gamma(r+1)} \left( 1+  O_{A}\left( \frac{k}{(\log n)^2} \right) \right)
\end{equation}
as $n \to \infty$, uniformly for $1 \le k \le A \log n$. 

For $n \to \infty$ and fixed $q$, Warlimont \cite{warlimont1993} proved that if we fix $\delta \in (0,q)$, then
\begin{equation}\label{eq:omega fnq intro}
\PP\left( \Omega(f_{n}) =k \right) = \frac{1}{n}\frac{(\log n)^{k-1}}{(k-1)!} \frac{1}{\Gamma(r+1)}\left(  h_q(r) + O_{\delta,q}\left( \frac{1}{\log n} \right) \right),
\end{equation}
uniformly for $1 \le k \le (q-\delta) \log n$. This is an analogue of \eqref{eq:ss2}; see also Car \cite{car1982} and Afshar and Porritt \cite{afshar2019}. Our Theorem~\ref{thm:pointwise} implies \eqref{eq:omega fnq intro} with the improved error term $k/(\log n)^2$. Indeed, for $n \to \infty$ and fixed $q$ and $\delta \in (0,1)$, we have $\PP(X_n=k-1) = O_{\delta,q}(\PP(K(\pi_n)=k))$ for $r \le q(1-\delta)$ by \eqref{eq:omega perm intro}, so that \eqref{eq:ineq} takes the form $\PP(\Omega(f_n)=k) = \PP(K(\pi_n)=k) (h_q(r) + O_{q,\delta}(k/(\log n)^2))$. By \eqref{eq:omega perm intro}, this implies \eqref{eq:omega fnq intro}.

In the opposite limit, where $q \to \infty$ while $1 \le k \le n$ are fixed, we have
\begin{equation}\label{eq:q lim}
\PP(\Omega(f_n)=k) = \PP(K(\pi_n)=k) \left( 1+O_n\left( \frac{1}{q}\right) \right)
\end{equation}
by a standard argument, see Remark~\ref{rem:q tv} below. We achieve an asymptotic formula for $\PP\left( \Omega(f_{n}) =k \right)$, which holds in the most general limit $q^n \to \infty$, by replacing the main term 
\begin{equation}\label{eq:old mt}
\frac{1}{n}\frac{(\log n)^{k-1}}{(k-1)!} \frac{h_q(r)}{\Gamma(r+1)},
\end{equation}
found by Warlimont, by a different one\footnote{See \cite{gorodetsky2017} for another example where modifying the main term leads to results in the $q^n \to \infty$ limit.}:
\begin{equation}\label{eq:new mt}
\PP(K(\pi_n)=k) h_q(r).
\end{equation}
These terms are asymptotic, in the large-$n$ limit, by the work of Hwang.

An uniform estimate for $\PP\left(\Omega(f_n)=k\right)$, in a limited range, was established previously by Arratia, Barbour and Tavar\'e \cite[Thm.~6.1]{arratia1993}, who proved that
\begin{equation}\label{eq:abr q}
\PP \left(\Omega (f_{n})=k \right) = 	\PP \left(K (\pi_{n})=k \right)\left( 1+ O\left( \frac{k}{q(\log n -k)} \right)\right), \qquad k<\log n,
\end{equation}
for $n>1$. Their proof is probabilistic and uses a coupling argument. Corollary~\ref{cor:pointwise} implies \eqref{eq:abr q}, since $h_q(r)=1+O(r/q)$ for $r \le 1$, by Lemma~\ref{lem:deriv2}.

A computation of Afshar and Porritt \cite[\S5]{afshar2019} shows that
\begin{equation}
\PP\left( \Omega(f_n)=k\right) = \PP \left(K (\pi_{n})=k \right)\left( 1+ O\left( \frac{kn}{q}\right)\right), \qquad kn=O(q).
\end{equation}
This gives an asymptotic estimate whenever $q$ grows faster than $kn$.  

Finally, we mention another work of Hwang \cite{hwang1998}, who studied $\PP(\Omega(f_n)=k)$ in the entire range of $k$, in the setting where $q$ is fixed.

\subsection{Previous works on total variation}
We may interpret $\mu_{K,n}$ and $\mu_{\Omega,n}$ as follows. Let $S_n^{\#}$ be the space of conjugacy classes in $S_n$. We have a natural map $X\colon S_n \to S_n^{\#}$, as well as the map $\Fr \colon \Mnq \to S_n^{\#}$ defined as follows: if $f \in \Mnq$ factors as $\prod_{i=1}^{d} P_i$, $\Fr(f)$ is the conjugacy class with cycle lengths $(\deg(P_i))_{i=1}^{d}$. For squarefree $f$, this map arises by labelling the roots of $f$ in the algebraic closure of $\FF_q$ and considering the permutation induced on them by the action of the Frobenius $x \mapsto x^q$. 
Letting $\mu_{S}$ be the uniform measure on a finite set $S$, we have two measures on $S_n^{\#}$: $\mu_{n}:=X_{*} \mu_{S_n}$ and $\mu_{n,q} := \Fr_{*} \mu_{\Mnq}$, where we use $A_{*}B$ to denote the pushforward of the measure $B$ under the map $A$. In this notation, $\mu_{K,n}= K_{*}\mu_n$ and $\mu_{\Omega,n} = K_{*}\mu_{n,q}$.  

The total variation distance of $\mu_{n,q}$ and $\mu_n$ was studied by Arratia, Barbour and Tavar\'{e} \cite[Cor.~5.6]{arratia1993}, who showed that it is of order $\Theta(1/q)$; see \cite{barysoroker2018} for an alternative proof by Bary-Soroker and the second author. This implies that
\begin{equation}\label{eq:tv q}
d_{\mathrm{TV}}(\mu_{K,n}, \mu_{\Omega,n}) = O\left( \frac{1}{q} \right).
\end{equation}
Additionally, in \cite[Thm.~6.8]{arratia1993} it is proved that 
\begin{equation}\label{eq:tv n}
d_{\mathrm{TV}}(\mu_{\Omega,n}, \Po(H_n))  = O\left( \frac{1}{\sqrt{\log n}}\right),
\end{equation}
where $H_n$ is the $n$th harmonic number and $\Po(\lambda)$ is the Poisson distribution with mean $\lambda$. From \eqref{eq:tv q} and \eqref{eq:tv n} and the triangle inequality, it follows by taking $q$ to infinity that \eqref{eq:tv n} holds with $\mu_{\Omega,n}$ replaced by $\mu_{K,n}$. An additional application of the triangle inequality yields
\begin{equation}\label{eq:tv in n}
d_{\mathrm{TV}}(\mu_{K,n}, \mu_{\Omega,n})  = O\left( \frac{1}{\sqrt{\log n}} \right).
\end{equation} 
Corollary~\ref{cor:tv} improves upon both \eqref{eq:tv q} and \eqref{eq:tv in n}, and is optimal.
\begin{remark}\label{rem:q tv}
From \eqref{eq:tv q}, $\PP(\Omega(f_n)=k) = \PP(K(\pi_n)=k) + O(1/q)$ and \eqref{eq:q lim} follows. In fact, the much weaker estimate $d_{\mathrm{TV}}(\mu_{K,n}, \mu_{\Omega,n}) = O_n(1/q)$ suffices; see \cite[Eq.~(2.3)]{cohen1970} or \cite[Lem.~2.1]{andrade2015} for a proof of it.
\end{remark}
\subsection*{Acknowledgments}
We thank Andrew Granville and G\'{e}rald Tenenbaum for feedback on an earlier version of the manuscript, and the anonymous referee for useful comments. OG was supported by the European Research Council (ERC) under the European Union’s 2020 research and innovation programme (ERC grant agreements nos 786758 and 851318).
\section{Preparation}
In what follows, $C$ and $c$ are always absolute constants whose values might change from one occurrence to the next. When constants appear with a subscript, their value may depend on the parameters in the subscript.
\subsection{Primes}
We denote by $\pi_q(n):=|\mathcal{P} \cap \Mnq|$ the number of primes of degree $n$. From Gauss's identity $\sum_{d \mid n} d \pi_q(d) =q^n$ \cite[Eq.~(1.3)]{arratia1993} we have the estimates
\begin{equation}\label{eq:ppt}
n\pi_q(n) \le q^n \text{ and }n\pi_q(n) = q^n + O(q^{\lfloor n/2\rfloor}), 
\end{equation}
which shall be used frequently.
\subsection{Generating functions}
We define the following power series:
\begin{equation}
\begin{split}
F(u,z) &= \sum _{n,k \ge 0}   \PP (K (\pi _n)=k) u ^n  z^k, \\
F_{q}(u,z) &= \sum _{n,k \ge 0}   \PP (\Omega (f_{n})=k) u ^n z^k .
\end{split}
\end{equation}
Since $\PP (K(\pi _n)=k)$ and $\PP (\Omega (f_{n})=k)$ are between $0$ and $1$, these series converge absolutely in 
\begin{equation}
A:=\{ (u,z) \in \CC \times \CC : |u|<1,\, |z| < 1\}
\end{equation}
and define analytic functions in that domain. We shall show that they can be analytically continued to a larger region. The logarithm function will always be used with its principal branch. Define the infinite product
\begin{equation}
H_q(u,z) := \prod _{P \in \mathcal{P}} \frac{\left( 1- \left( \frac{u}{q} \right)^{\deg (P)} \right)^z}{1-z \left( \frac{u}{q} \right)^{ \deg (P)}},
\end{equation}
so that $H_q (1, x )=h_q(x )$. Here $(1-(u/q)^{\deg(P)})^{z} = \exp(z\log(1-(u/q)^{\deg(P)}))$. In the next lemma we study the convergence of $H_q(u,z)$ in
\begin{equation}
B:=\{(u,z) \in \CC\times \CC  : |u|<\sqrt{q},\, |uz|<q \}.
\end{equation}
\begin{lem}\label{lem:hq convergence}
$H_q(u,z)$ converges uniformly to an analytic function on every compact subset of $B$. 
\end{lem}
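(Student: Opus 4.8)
The plan is to take logarithms of the factors and apply the Weierstrass $M$-test, the only arithmetic input being the prime bound $n\pi_q(n)\le q^n$ from \eqref{eq:ppt}. Fix a compact set $K\subseteq B$. By compactness there are $\rho_1<\sqrt q$ and $\rho_2<q$ with $|u|\le\rho_1$ and $|uz|\le\rho_2$ throughout $K$; put $\alpha:=\rho_1/q$ and $\beta:=\rho_2/q$, so that $\alpha<1$, $\beta<1$ and, crucially, $q\alpha^2=\rho_1^2/q<1$. For a prime $P$ of degree $d$ write $w=(u/q)^d$; then on $K$ one has $|w|\le\alpha^d$ and $|zw|=|uz|(|u|/q)^{d-1}/q\le\beta\alpha^{d-1}\le\beta<1$. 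In particular the denominator $1-z(u/q)^d$ never vanishes on $B$, so each factor of $H_q$ is analytic there and, since also $|w|<1$, equals $\exp(L_P)$ where, with principal logarithms,
\[
L_P(u,z):=z\log(1-w)-\log(1-zw)=\sum_{m\ge 1}\frac{w^m(z^m-z)}{m}=\sum_{m\ge 2}\frac{w^m(z^m-z)}{m};
\]
the essential structural point is the vanishing of the $m=1$ term.

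Next I would estimate $L_P$ on $K$. Bounding $|w^mz^m|=|zw|^m\le(\beta\alpha^{d-1})^m$ and $|w^mz|=|zw|\,|w|^{m-1}\le\beta\alpha^{d-1}\alpha^{d(m-1)}$ and summing the resulting geometric series (using $\alpha,\beta<1$) yields a bound of the shape $|L_P(u,z)|\le C_K\,\alpha^{2d}$, uniformly on $K$, with $C_K$ depending only on $\alpha$ and $\beta$. Summing over primes and invoking $\pi_q(d)\le q^d/d$,
\[
\sum_{P\in\mathcal{P}}\sup_{K}|L_P|\le C_K\sum_{d\ge 1}\pi_q(d)\,\alpha^{2d}\le C_K\sum_{d\ge 1}\frac{(q\alpha^2)^d}{d}=-C_K\log(1-q\alpha^2)<\infty,
\]
since $q\alpha^2<1$. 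Hence $\sum_P L_P$ converges absolutely and uniformly on $K$, and by Weierstrass's theorem its limit $L$ is analytic on $B$. Exponentiating, the partial products of $H_q$ (ordered by degree) equal $\exp\big(\sum_{\deg P\le N}L_P\big)$ and converge uniformly on $K$ to $\exp(L)$; as $K$ was arbitrary, $H_q=\exp(L)$ is analytic on $B$, as claimed. (If $\rho_1=0$ the product is identically $1$ and there is nothing to prove.)

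The only delicate point — and where both defining inequalities of $B$ enter — is the bound $|L_P|\le C_K\alpha^{2d}$. The condition $|uz|<q$ is precisely what makes $|zw|<1$, so that the logarithms, the power-series expansion, and the non-vanishing of the denominators are all legitimate; the condition $|u|<\sqrt q$ is precisely what makes $\sum_d\pi_q(d)\alpha^{2d}$ converge. Here it is indispensable that the $m=1$ term of $L_P$ cancels: retaining it would give only $|L_P|\le C_K\alpha^d$, and $\sum_d\pi_q(d)\alpha^d$ diverges as soon as $\rho_1\ge 1$, which is permitted in $B$. The remaining steps are routine analysis.
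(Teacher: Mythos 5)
Your proof is correct, and the overall route is the same as the paper's: pass to $\log h_P$, observe that the $m=1$ term cancels so the series for $\log h_P$ starts at $m=2$, bound the resulting tail, and sum over primes using $\pi_q(d)\le q^d/d$. The execution, however, is genuinely streamlined. You estimate each $|L_P|$ directly by a quantity $C_K\alpha^{2d}$ depending only on $\deg P=d$ and the compact set, and then sum $\sum_d\pi_q(d)\alpha^{2d}\le\sum_d(q\alpha^2)^d/d$, which converges because $q\alpha^2=\rho_1^2/q<1$. The paper instead regroups the double sum $\sum_P\sum_{i\ge2}$ by $n=\deg(P)\cdot i$, which forces it to bound $\max_{1\le t\le n/2}q^t|z|^{n/t}$ via log-convexity of $t\mapsto q^t|z|^{n/t}$ and then split the resulting estimate into several pieces ($S_1$, $S_2$, and a tail in $N$). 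Your per-prime bound avoids the convexity step and the case analysis entirely, at the cost of a slightly less explicit final constant — which is irrelevant here since the lemma is purely qualitative. Both arguments correctly isolate the two roles of the defining inequalities of $B$ ($|uz|<q$ for legitimacy of the logarithms, $|u|<\sqrt q$ for convergence against the prime counts), and both are complete; yours is simply tighter in exposition.
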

\begin{proof}
For any $P \in \mathcal{P}$, let 
\begin{equation}\label{eq:def hP}
h_P(u,z) := \frac{\left( 1- \left( \frac{u}{q} \right)^{\deg (P)} \right)^z}{1-z \left( \frac{u}{q} \right)^{ \deg (P)}},
\end{equation}
which is analytic in $B$. We have
\begin{equation}\label{eq:sum gP}
\log h_P(u,z) =  \sum_{i \ge 2} \frac{(\frac{u}{q})^{\deg(P)i}}{i}(z^i - z) 
\end{equation}
in $B$. Fix a a real number $r\in(0,\sqrt{q})$, and consider the compact subset $B_{r} :=  \{ (u,z) \in \CC \times \CC : |u|\le r,\, |z|\le (\sqrt{q}-r)^{-1}, \,  |uz/q| \le r/\sqrt{q}\}$ of $B$. Any compact subset of $B$ is contained in $B_{r}$ for some $r$. 
We have, by the triangle inequality,
\begin{equation}\label{eq:log up n}
\begin{split}
\sum_{\deg(P) \le N} \left| \log h_P(u,z) \right| &\le \sum_{\deg(P) \le N} \sum_{i \ge 2} \frac{\big|\frac{u}{q}\big|^{\deg(P)i}}{i}(|z|^i + |z|)\\
&= \sum_{n \ge 1} \frac{\big|\frac{u}{q}\big|^n}{n} \sum_{\substack{d \le N\\ d\mid n,\, d \neq n}} d\pi_q(d) (|z|^{n/d} + |z|)
\end{split}
\end{equation}
for $(u,z) \in B_r$. Recall $\pi_q(d) \le q^d/d$. We may assume without loss of generality that $|z| \ge 1$ (by possibly increasing $r$), since the right-hand side of \eqref{eq:log up n} is increasing in $|z|$. The function $s(t)=q^{t}|z|^{n/t}$ on $[1,\min\{N,n/2\}]$ attains its maximum on one of the endpoints (since $(\log s(t))'' \ge 0$). Hence we have in $B_{r}$
\begin{equation}
\sum_{\deg(P) \le N} \left| \log h_P(u,z) \right|  \le \sum_{n \ge 1}  \big| \frac{u}{q} \big|^n  \max_{1 \le t \le \min\{N,n/2\}} (q^t |z|^{n/t}) +  \sum_{n \ge 1} (\frac{r}{q})^n q^{n/2}|z|   =: S_1 + S_2.
\end{equation}
We bound $S_1$:
\begin{equation}
\begin{split}
S_1 &\le \sum_{n \ge 1} \big|\frac{u}{q}\big|^n (q|z|^n + q^{\min\{N,n/2\}} |z|^{n/\min\{N,n/2\}}) \\
& =  q \sum_{n \ge 1} \big|\frac{uz}{q}\big|^n + |z|^2\sum_{n \le 2N} \big|\frac{u}{\sqrt{q}}\big|^n + q^{N}\sum_{n > 2N} \big( \frac{|u||z|^{1/N}}{q} \big)^n .
\end{split}
\end{equation}
The first sum is at most $q \sum_{n \ge 1} (r/\sqrt{q})^n = qr/(\sqrt{q}-r)$. The second sum is at most $|z|^2 \sum_{n \ge 1} (r/\sqrt{q})^n = |z|^2 r/(\sqrt{q}-r)$. If $|z|< 1$, the third sum is at most $q^N \sum_{n>2N} (1/\sqrt{q})^n \le 4/\sqrt{q}$. Otherwise, $|uz^{1/N}/q|\le|uz/q|<1$ and so the third sum is $q^N( |u||z|^{1/N}/q)^{2N+1}/(1-|u||z|^{1/N}/q)\le q^{-1}|z|^3|u| (|u|^2/q)^{N} /(1-r/\sqrt{q}) \le (r/(\sqrt{q}(\sqrt{q}-r)^4)) (r^2/q)^N$. We evaluate $S_2$:
\begin{equation}
S_2  =|z|\sum_{n \ge 1} (\frac{r}{\sqrt{q}})^n = |z| \frac{r}{\sqrt{q}-r}.
\end{equation}
All in all,
\begin{equation}
\sum_{\deg(P) \le N} \left|  \log h_P(u,z)\right| \le \frac{(q+|z|^2 + |z|)r}{\sqrt{q}-r} + \frac{4}{\sqrt{q}} + \frac{r}{\sqrt{q}(\sqrt{q}-r)^4}\left(\frac{r^2}{q} \right)^N
\end{equation}
for $(u,z) \in B_r$. Taking $N$ to infinity, we find that $\sum_{P \in \mathcal{P}}  |\log h_P(u,z)|$ converges and is bounded by a constant independent of $(u,z)\in B_r$. This proves that $H_q(u,z)$ converges uniformly to an analytic function on $B_{r}$. 
\end{proof}
\begin{lem}
For $(u,z) \in A$ we have
\begin{equation}
\begin{split}
F(u,z) &= (1-u)^{-z}, \\
F_{q}(u,z) &= (1-u)^{-z} H_q(u,z).
\end{split}
\end{equation}
\end{lem}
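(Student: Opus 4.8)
The plan is to match each generating function with a classical Euler-type product via unique factorization. Since all the series and infinite products in play converge absolutely on $A$ (each is dominated by $\sum_{n}|u|^n$, using $|z|<1$), I can carry out every manipulation directly on $A$, without first restricting to a small polydisc; alternatively, one could establish the identities near the origin and extend them to $A$ by the identity theorem, using the analyticity of $H_q$ on $B\supseteq A$ supplied by Lemma~\ref{lem:hq convergence}.

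For the permutation identity I would use that the cycle-type polynomial of $S_n$ is a rising factorial, $\sum_{k}|s(n,k)|z^k = z(z+1)\cdots(z+n-1)$, so that $\sum_{k}\PP(K(\pi_n)=k)z^k = \binom{z+n-1}{n}$; summing against $u^n$ and invoking the generalized binomial theorem $(1-u)^{-z}=\sum_{n\ge 0}\binom{z+n-1}{n}u^n$ (valid for $|u|<1$ and all $z\in\CC$) gives $F(u,z)=(1-u)^{-z}$ on $A$. Equivalently, one can apply the exponential formula to the decomposition "a permutation is a set of cycles, a cycle of length $m$ contributing $zu^m/m$", which yields $F=\exp(-z\log(1-u))=(1-u)^{-z}$.

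For the polynomial identity I would first rewrite $F_q(u,z)=\sum_{f \text{ monic}} z^{\Omega(f)}(u/q)^{\deg f}$ (summing $\PP(\Omega(f_n)=k)=q^{-n}\#\{f\in\Mnq:\Omega(f)=k\}$ first over $k$, then grouping by degree), and then expand it by unique factorization in $\FF_q[T]$ into the Euler product $\prod_{P\in\mathcal{P}}(1-z(u/q)^{\deg P})^{-1}$. The specialization $z=1$ of this computation — equivalently, Gauss's identity $\sum_{d\mid n}d\pi_q(d)=q^n$ recorded in \eqref{eq:ppt} — gives the zeta-function relation $\prod_{P}(1-(u/q)^{\deg P})=1-u$ for $|u|<1$. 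Raising this to the power $z$ and dividing, $H_q(u,z)=\prod_P(1-(u/q)^{\deg P})^z\big/\prod_P(1-z(u/q)^{\deg P}) = (1-u)^z\big/\prod_P(1-z(u/q)^{\deg P})$ (the splitting of $H_q$ into a quotient of two products being legitimate since both converge on $A$), and multiplying through by $(1-u)^{-z}$ recovers $F_q(u,z)$.

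The only step that needs genuine care is passing from $\prod_{P}(1-(u/q)^{\deg P})=1-u$ to $\prod_{P}(1-(u/q)^{\deg P})^z=(1-u)^z$ with principal branches throughout, i.e. ruling out a stray factor $e^{2\pi i m z}$. I would handle this by setting $g(u):=\sum_{P}\log(1-(u/q)^{\deg P})$, which is an absolutely and locally uniformly convergent sum on $|u|<1$ (since $|\log(1-(u/q)^{\deg P})|\ll|u/q|^{\deg P}$ and $\sum_P|u/q|^{\deg P}\le\sum_n|u|^n<\infty$), hence analytic there with $g(0)=0$; exponentiating gives $\exp(g(u))=1-u$, so $g(u)-\log(1-u)$ is a continuous $2\pi i\ZZ$-valued function on the connected set $\{|u|<1\}$ vanishing at $0$, hence identically zero, and therefore $\prod_P(1-(u/q)^{\deg P})^z=\exp(zg(u))=(1-u)^z$. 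I expect this branch bookkeeping to be the only nontrivial point; everything else reduces to the domination $\sum_{P}|u/q|^{\deg P}\le\sum_{n}|u|^n<\infty$ together with $|z|<1$.
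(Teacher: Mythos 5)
Your proof is correct, and it takes a genuinely different route from the paper for the $F_q$ identity, so a comparison is worthwhile. For $F(u,z)=(1-u)^{-z}$ your rising-factorial argument ($\sum_k|s(n,k)|z^k = z(z+1)\cdots(z+n-1)$ combined with the generalized binomial theorem) is just a repackaging of the exponential formula the paper invokes; both are standard and essentially interchangeable. For the $F_q$ identity, you expand $F_q(u,z)=\sum_f z^{\Omega(f)}(u/q)^{\deg f}$ directly into the Euler product $\prod_P(1-z(u/q)^{\deg P})^{-1}$ on all of $A$, justifying the rearrangement of the multiple sum by absolute convergence ($\sum_f|z|^{\Omega(f)}(|u|/q)^{\deg f}\le\sum_n|u|^n<\infty$); the paper instead establishes the identity only for real $(u,z)\in(0,1)^2$ via a monotone two-sided squeeze between truncated Euler products, and then extends to $A$ by the identity theorem. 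Your route is a bit more direct; the paper's avoids invoking rearrangement of an infinite product at the price of a uniqueness step. The one point where you are actually more careful than the paper is the branch bookkeeping: passing from $\prod_P(1-(u/q)^{\deg P})^{-1}=(1-u)^{-1}$ to $(1-u)^{-z}=\prod_P(1-(u/q)^{\deg P})^{-z}$ with principal branches requires exactly the argument you give (setting $g(u)=\sum_P\log(1-(u/q)^{\deg P})$ and noting $g-\log(1-u)$ is a $2\pi i\ZZ$-valued continuous function vanishing at $0$), whereas the paper buries this under a single ``Hence.'' In fact, as you hint, Gauss's identity $\sum_{d\mid n}d\pi_q(d)=q^n$ gives the log-level identity $g(u)=-\sum_{n\ge 1}u^n/n=\log(1-u)$ directly, which dispatches the branch issue in one stroke and would be the cleanest way to phrase it.
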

\begin{proof}
By the exponential formula for permutations \cite[Cor.~5.1.9]{stanley1999}, we have the equality
\begin{equation}\label{eq:expo}
\sum _{n \ge 0} \sum_{ \pi \in S_n} \frac{z^{K(\pi)}}{n!} u^n= \exp \left( \sum _{i=0}^{\infty } \frac{z}{i} u^i \right)=\exp \left( -z \log (1-u) \right)= (1-u)^{-z},
\end{equation}
which should be interpreted as equality of formal power series. The left-hand side of \eqref{eq:expo} is $F(u,z)$. Since both sides of \eqref{eq:expo} define analytic function in $A$, the uniqueness principle implies $F(u,z)=(1-u)^{-z}$ in $A$. We have
\begin{equation}
\prod _{P \in \mathcal{P}} \left( 1- \left(\frac{u}{q}\right)^{\deg (P)}\right)^{-1} = \prod _{P \in \mathcal{P}} \left(  \sum_{n=0}^{\infty} \left(\frac{u}{q} \right)^{ \deg (P^n)}  \right) =  \sum _{f \in \FF_q[T], \text{ monic}} \frac{u^{\deg (f) }}{q^{\deg (f)}} =  \sum _{n=0}^{\infty } u^n=(1-u)^{-1}
\end{equation}
for $|u|<1$. Hence
\begin{equation}
\begin{split}
(1-u)^{-z} H_q(u,z) &= \prod _{P \in \mathcal{P}} \left( 1- z\left( \frac{u}{q} \right)^{\deg (P)} \right)^{-1} = \prod_{P \in \mathcal{P}} \left( \sum_{n \ge 0} z^n \left( \frac{u}{q} \right)^{\deg (P^n)} \right).
\end{split}
\end{equation}
Fix a positive integer $N$. For real $u,z\in (0,1)$, we have, by unique factorization in $\FF_q[T]$,
\begin{multline}
\sum_{\substack{f \in \FF_q[T], \text{ monic}\\\deg(f) \le N}} \left( \frac{u}{q} \right) ^{\deg(f)} z^{\Omega(f)}  \le \prod_{P \in \mathcal{P},\, \deg(P)\le N} \left( 1+ z \left( \frac{u}{q} \right)^{\deg(P)}+z^2\left( \frac{u}{q} \right)^{\deg(P^2)} + \ldots + z^N \left( \frac{u}{q} \right)^{\deg(P^N)} \right)\\
 \le \prod_{\deg(P)\le N} \left( \sum_{n \ge 0} z^n \left( \frac{u}{q} \right)^{\deg (P^n)} \right) \le \prod_{P \in \mathcal{P}}\left( \sum_{n \ge 0} z^n \left( \frac{u}{q} \right)^{\deg (P^n)} \right) = (1-u)^{-z} H_q(u,z).
\end{multline}
Letting $N \to \infty$, we obtain $F_{q}(u,z) \le (1-u)^{-z} H_q(u,z)$. To prove the reverse inequality, fix positive integers $N<M$ and note that, again by unique factorization,
\begin{equation}
\prod_{\deg(P)\le N}\left( 1 + z\left( \frac{u}{q} \right)^{\deg(P)}+z^2\left( \frac{u}{q} \right)^{\deg(P^2)} + \ldots + z^M\left( \frac{u}{q} \right)^{\deg(P^M)} \right) \le \sum_{f \in \FF_q[T], \text{ monic}} \left( \frac{u}{q} \right)^{\deg(f)} z^{\Omega(f)}.
\end{equation}
Letting $M \to \infty$ we obtain $\prod _{\deg(P)\le N} \left( \sum_{n \ge 0} z^n \left( u/q \right)^{\deg (P^n)} \right) \le F_{q}(u,z)$. Letting $N \to \infty$ we obtain $(1-u)^{-z} H_q(u,z) \le F_{q}(u,z)$. Thus $(1-u)^{-z} H_q(u,z)$ and $F_{q}(u,z)$ agree on $(0,1)\times(0,1)$ and so by the uniqueness principle are equal.
\end{proof}
From now on we consider the function $(1-u)^{-z}$ as an analytic function in $\CC \times (\CC \setminus [1,\infty))$, by using the definition $(1-u)^{-z} = \exp( -z\log (1-u))$.
\begin{lem}\label{lem:deriv2}
Fix $\delta \in (0,1)$.	Suppose $q\ge (1-\delta)^{-2}$, $|u_0| \le (1-\delta)^{-1/2}$ and $|z_0| \le (1-\delta)q$.  Then 
	\begin{equation}
	\left| \big(\frac{\partial}{\partial u} H_q\big)(u_0,z_0) \right|, \, 	\left| \big(\frac{\partial}{\partial z} H_q\big)(u_0,z_0) \right|,\,
	\left| \big(\frac{\partial^2}{\partial z^2} H_q\big)(u_0,z_0) \right|\le C_{\delta}   \frac{|z_0|^2+1}{q}\exp\left( C_{\delta} \frac{|z_0|^2}{q} \right).
	\end{equation}
\end{lem}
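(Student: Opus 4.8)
The plan is to work with $G:=\log H_q$. Since $H_q=\prod_{P\in\mathcal{P}}h_P=\prod_P e^{\log h_P}$ and, as shown in the proof of Lemma~\ref{lem:hq convergence}, the series $\sum_P|\log h_P|$ converges uniformly on compact subsets of $B$, we have $H_q=e^{G}$ with $G=\sum_P\log h_P$ analytic on $B$, and $\partial_u G,\partial_z G,\partial_z^2 G$ may be computed by differentiating this series term by term. The point $(u_0,z_0)$ lies in $B$: indeed $|u_0|\le(1-\delta)^{-1/2}<(1-\delta)^{-1}\le\sqrt q$ and $|u_0z_0|\le(1-\delta)^{1/2}q<q$. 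From $H_q=e^{G}$ we get
\[
\partial_u H_q=H_q\,\partial_u G,\qquad \partial_z H_q=H_q\,\partial_z G,\qquad \partial_z^2 H_q=H_q\bigl((\partial_z G)^2+\partial_z^2 G\bigr),
\]
so it will suffice to bound $|H_q(u_0,z_0)|$ together with $|\partial_u G(u_0,z_0)|$, $|\partial_z G(u_0,z_0)|$ and $|\partial_z^2 G(u_0,z_0)|$.

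The core estimate I would establish is that
\[
\bigl|G(u_0,z_0)\bigr|,\ \bigl|\partial_u G(u_0,z_0)\bigr|,\ \bigl|\partial_z G(u_0,z_0)\bigr|,\ \bigl|\partial_z^2 G(u_0,z_0)\bigr|\ \le\ C_\delta\,\frac{|z_0|^2+1}{q}
\]
under the hypotheses of the lemma. I would derive this from the expansion \eqref{eq:sum gP}, $\log h_P(u,z)=\sum_{i\ge 2}\frac{(u/q)^{i\deg P}}{i}(z^i-z)$: differentiate term by term (once or twice, in $u$ or in $z$), apply the triangle inequality, group the double sum according to $d=\deg P$, and bound the number of degree-$d$ primes by $\pi_q(d)\le q^d/d$ (from \eqref{eq:ppt}). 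After one factor $q^d$ is cancelled against $q^{id}$, every surviving term is a bounded multiple of $|u_0|^{a}|z_0|^{j}/q^{d(i-1)}$ with $0\le j\le i$ and with $a$ equal to $id$ or $id-1$ (the latter coming from the $u$-derivative), so $d(i-1)\ge 1$ powers of $q$ remain in the denominator. The sums over $i\ge 2$ reduce to geometric series (or their first derivatives) with ratio $|u_0|^{d}|z_0|/q^{d}$ or $|u_0|^{d}/q^{d}$; by $|z_0|\le(1-\delta)q$ the first ratio is $\le(1-\delta)^{1/2}$ when $d=1$ and $\le 1-\delta$ when $d\ge 2$, while the second is $\le(1-\delta)^{3/2}$, so each such sum converges, costs only a factor $O_\delta(1)$, and leaves the power of $|z_0|$ at most $2$. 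It remains to sum over $d$ keeping a spare factor $1/q$. For $d=1$ this is automatic, the denominator being $q^{i-1}$ with $i\ge 2$. For $d\ge 2$ one is reduced to series such as $\sum_{d\ge 2}(|u_0|^2/q)^{d}$; here $|u_0|^2/q\le 1-\delta$ gives geometric decay, and splitting off $(|u_0|^2/q)^2=|u_0|^4/q^2$, which is $\le 1/q$ because $|u_0|^4\le(1-\delta)^{-2}\le q$, supplies the missing $1/q$. Combining the $|z_0|$ and $|z_0|^2$ contributions via $|z_0|\le|z_0|^2+1$ then yields the displayed bound.

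Granting this, the conclusion follows quickly. First, $|H_q(u_0,z_0)|=e^{\Re G(u_0,z_0)}\le e^{|G(u_0,z_0)|}\le e^{C_\delta(|z_0|^2+1)/q}\le C_\delta\,e^{C_\delta|z_0|^2/q}$. Hence
\[
\bigl|\partial_u H_q(u_0,z_0)\bigr|\le\bigl|H_q(u_0,z_0)\bigr|\,\bigl|\partial_u G(u_0,z_0)\bigr|\le C_\delta\,\frac{|z_0|^2+1}{q}\,e^{C_\delta|z_0|^2/q},
\]
and similarly for $\partial_z H_q$. For $\partial_z^2 H_q$ the contribution of $H_q\,\partial_z^2 G$ is bounded the same way, while
\[
\bigl|H_q(u_0,z_0)\bigr|\,\bigl|\partial_z G(u_0,z_0)\bigr|^2\le C_\delta\,\frac{(|z_0|^2+1)^2}{q^2}\,e^{C_\delta|z_0|^2/q};
\]
one of the two factors $(|z_0|^2+1)/q$ is absorbed into the exponential using the elementary inequality $(|z_0|^2+1)/q\le e^{|z_0|^2/q}$, which holds because $e^{t}\ge 1+t$ with $t=|z_0|^2/q$ and $q\ge 1$. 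This gives the claimed bound for $\partial_z^2 H_q$ as well.

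The step I expect to be the main obstacle is the bookkeeping in the second paragraph: one has to verify that \emph{every} term other than the leading one (those arising from primes of degree $\ge 2$, and from the powers $z^i$ with $i\ge 3$) genuinely carries a spare factor $1/q$, rather than merely a bounded factor. This is the single place where all three hypotheses $q\ge(1-\delta)^{-2}$, $|u_0|\le(1-\delta)^{-1/2}$ and $|z_0|\le(1-\delta)q$ are used simultaneously, the decisive combination being $|u_0|^4/q^2\le 1/q$ together with $|u_0|^2/q\le 1-\delta$ and $|u_0||z_0|/q\le(1-\delta)^{1/2}$.
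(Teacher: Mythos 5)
Your proposal is correct and follows essentially the same route as the paper: both pass to $\log H_q$, differentiate the series termwise, bound the result by $C_\delta(|z_0|^2+1)/q$ using $\pi_q(d)\le q^d/d$ and the hypotheses on $q$, $|u_0|$, $|z_0|$, and then transfer to $H_q$ via $(\exp g)'=g'\exp g$ and $(\exp g)''=(g''+(g')^2)\exp g$. The only cosmetic difference is bookkeeping: the paper regroups the double sum over $(P,i)$ into a single sum over $n=i\deg P$ and controls the inner sum by maximizing $q^t|z_0|^{n/t}$ over divisors, whereas you keep the double sum and sum each $i$-series as a geometric series before summing over $d=\deg P$.
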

\begin{proof}
	We have
	\begin{equation}
	H_q(u,z) = \exp\left( \log H_q(u,z)\right) = \exp\left( \sum_{n \ge 1} \frac{(\frac{u}{q})^n}{n} \sum_{d \mid n,\, d \neq n} d \pi_q(d) (z^{n/d}-z) \right),
	\end{equation}
	where the sum converges absolutely and uniformly in some neighborhood of $(u_0,z_0)$ by Lemma~\ref{lem:hq convergence} and its proof. For all $i,j \ge 0$,
		\begin{equation}\label{eq:partial log}
	(\frac{\partial^{i+j}}{\partial^i u \partial^j z} \log H_q)(u,z) = \sum_{n \ge 2} u^{n-i} q^{-n}\frac{n(n-1)\cdots (n-(i-1))}{n} \sum_{d \mid n,\, d \neq n} d \pi_q(d) (z^{\frac{n}{d}-j}\frac{n}{d}(\frac{n}{d}-1)\cdots (\frac{n}{d}-(j-1))-z^{1-j}),
	\end{equation}
	where $z^k$ should be interpreted as $0$ for negative $k$. Recall the bound $\pi_q(d) \le q^d/d$, and that the function $s(t)=q^{t}|z_0|^{n/t}$ on $[1,n/2]$ attains its maximum on one of the endpoints if $|z_0| \ge 1$. Otherwise, $s(t) \le q^{n/2}$. Hence
	\begin{equation}\label{eq:partial log2}
	\left| (\frac{\partial^{i+j}}{\partial^i u \partial^j z} \log H_q)(u_0,z_0) \right| \le C \sum_{n \ge 2} (1-\delta)^{-n/2}q^{-n}  n^{i+j}
	(q |z_0|^n + q^{n/2}(1+|z_0|^2))
	\end{equation}
	for all $i,j \ge 0$. As $\sum_{n \ge k} x^n n^m \le C_{k+m} x^k/(1-x)^{m+1}$ for $x \in (0,1)$, we find
	\begin{equation}\label{eq:partial log3}
	\left| (\frac{\partial^{i+j}}{\partial^i u \partial^j z} \log H_q)(u_0,z_0) \right| \le \frac{C_{i+j,\delta}(| z_0|^2+1)}{q} .
	\end{equation}
Since $(\exp(g))' = g' \exp(g)$ and $(\exp(g))'' = (g''+g'^2) \exp(g)$ for any analytic function $g$, we are done.
\end{proof}

\begin{lem}\label{lem:h size}
If $q>x \ge 1$,
\begin{equation}\label{eq:h lower 1}
h_q(x) \ge 1 + \frac{x-1}{2q} \ge 1.
\end{equation}
If $0\le x \le 1$,
\begin{equation}\label{eq:h lower 2}
h_q(x) \ge c.
\end{equation}
\end{lem}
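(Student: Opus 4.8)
The plan is to pass to $\log h_q(x)$, which has a transparent Dirichlet-type expansion. Expanding each Euler factor of \eqref{eq:hq} via $-\log(1-t)=\sum_{i\ge1}t^i/i$ (or, equivalently, specializing the formula for $\log H_q(u,z)$ derived in the proof of Lemma~\ref{lem:deriv2} at $u=1$, $z=x$), one gets
\[
\log h_q(x)\;=\;\sum_{P\in\mathcal P}\sum_{i\ge2}\frac{x^i-x}{i\,|P|^i},
\]
the $i=1$ contributions cancelling. For $0\le x<q$ the point $(1,x)$ lies in $B$ (as $q>1$), so by Lemma~\ref{lem:hq convergence} this double series converges absolutely and the rearrangement is legitimate; moreover each Euler factor is positive on $[0,q)$, so $h_q(x)>0$ and $\log h_q(x)$ is a genuine real number.

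I would first treat $1\le x<q$. There $x^i\ge x$ for every $i\ge2$, so every summand above is non-negative. Keeping only the terms with $i=2$ and with $P$ one of the $q$ monic linear irreducibles (each of norm $q$) gives
\[
\log h_q(x)\;\ge\; q\cdot\frac{x^2-x}{2q^2}\;=\;\frac{x(x-1)}{2q}\;\ge\;\frac{x-1}{2q}.
\]
Since $e^t\ge1+t$, this yields $h_q(x)\ge 1+(x-1)/(2q)\ge 1$, which is \eqref{eq:h lower 1}.

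Next I would treat $0\le x\le1$. Now $x^i\le x$ for $i\ge2$, so every summand is non-positive and bounded below by $-x/(i|P|^i)\ge-1/(i|P|^i)$; hence
\[
\log h_q(x)\;\ge\;-\sum_{P}\sum_{i\ge2}\frac1{i\,|P|^i}\;\ge\;-\sum_{P}\frac1{|P|^2},
\]
where the last step uses $\sum_{i\ge2}m^{-i}/i\le\tfrac12\sum_{i\ge2}m^{-i}=\frac1{2m(m-1)}\le m^{-2}$ for every integer $m\ge2$ (valid as $|P|\ge q\ge2$). Finally, by $n\pi_q(n)\le q^n$ from \eqref{eq:ppt},
\[
\sum_P|P|^{-2}=\sum_{n\ge1}\pi_q(n)q^{-2n}\le\sum_{n\ge1}\frac1{nq^n}=-\log(1-1/q)\le\log2,
\]
so $h_q(x)\ge e^{-\log2}=\tfrac12$, proving \eqref{eq:h lower 2} with $c=\tfrac12$.

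Everything past the initial identity is elementary; the only point needing care is the absolute convergence that legitimizes the term-by-term manipulations of the double series, and that is already supplied by Lemma~\ref{lem:hq convergence}, so I do not foresee a genuine obstacle.
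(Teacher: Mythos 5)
Your proof is correct and close in spirit to the paper's, with two small cosmetic differences worth noting. For \eqref{eq:h lower 1} the paper first applies Bernoulli's inequality factor-by-factor to get $h_q(x)\ge1$ (so the non-linear-prime factors can be discarded), then takes logarithms of the linear-prime contribution alone; you instead pass to $\log h_q(x)=\sum_P\sum_{i\ge2}(x^i-x)/(i|P|^i)$ from the outset, observe that every term is nonnegative for $x\ge1$, and drop all but the $i=2$, $\deg P=1$ terms, arriving at the same bound. For \eqref{eq:h lower 2} the paper simply cites the derivative bound \eqref{eq:partial log3} to get $\log h_q(x)=O(x/q)$, whereas you carry out the estimate directly (bounding $\sum_P\sum_{i\ge2}1/(i|P|^i)$ by $\sum_P|P|^{-2}\le-\log(1-1/q)\le\log 2$), which is more self-contained and yields an explicit constant $c=1/2$. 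Both routes are sound; the paper's version leans on machinery already in place, yours is elementary and makes the constant visible.
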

\begin{proof}
By Bernoulli's inequality, $(1-1/|P|)^x \ge 1-x/|P|$ for $x \ge 1$, and so $h_q(x) \ge 1$ for $x \ge 1$. By considering the contribution of linear primes to $h_q(x)$ in \eqref{eq:hq}, we see that for $x \ge 1$,
\begin{equation}
h_q(x) \ge \left( 1-\frac{1}{q}\right)^{xq}\left(1-\frac{x}{q}\right)^{-q} = \exp\left( \sum_{i \ge 2} \frac{x^i-x}{iq^{i-1}} \right) \ge \exp\left( \frac{x^2-x}{2q} \right) \ge 1 + \frac{x^2-x}{2q} \ge 1 +\frac{x-1}{2q}.
\end{equation}
For $0 \le x \le 1$, we have $\log h_q(x) = O(x/q)$ by \eqref{eq:partial log3}, so that $h_q(x) \ge \exp(-cx/q) \ge c$.
\end{proof}

\subsection{Poisson distribution}
\begin{lem}\cite[Thm.~5.4]{mitzenmacher2005}\label{lem:poiss}
Let $X$ be a Poisson random variable with mean $\lambda >0$. We have $\PP(X \ge x) \le (e\lambda/x)^x e^{-\lambda}$ for $x>\lambda$.
\end{lem}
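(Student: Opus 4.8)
The plan is to apply the Chernoff (exponential moment) method. The first step is to compute the moment generating function of $X$: since $X$ is Poisson with mean $\lambda$,
\begin{equation}
\EE\left[ e^{tX} \right] = e^{-\lambda}\sum_{j\ge 0}\frac{(\lambda e^{t})^{j}}{j!} = \exp\left(\lambda(e^{t}-1)\right)
\end{equation}
for every real $t$, the series converging absolutely.

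Next, for any $t>0$ I would apply Markov's inequality to the nonnegative random variable $e^{tX}$:
\begin{equation}
\PP(X\ge x) = \PP\left(e^{tX}\ge e^{tx}\right) \le e^{-tx}\,\EE\left[e^{tX}\right] = \exp\left(\lambda(e^{t}-1)-tx\right).
\end{equation}
The last step is to optimize over $t$. Minimizing the exponent $\lambda(e^{t}-1)-tx$ in $t$ gives $\lambda e^{t}=x$, i.e.\ $t=\log(x/\lambda)$; this is a valid choice (that is, $t>0$) precisely because $x>\lambda$, which is the hypothesis. Plugging $e^{t}=x/\lambda$ back in,
\begin{equation}
\PP(X\ge x) \le \exp\left(x-\lambda-x\log\frac{x}{\lambda}\right) = e^{-\lambda}\left(\frac{e\lambda}{x}\right)^{x},
\end{equation}
which is exactly the claimed inequality.

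I do not anticipate any genuine obstacle: the computation is standard, and the only place the hypothesis $x>\lambda$ enters is to guarantee that the optimal parameter $t=\log(x/\lambda)$ lies in the range $t>0$ in which Markov's inequality is being applied. One could equally well skip the optimization and simply verify the bound by taking $t=\log(x/\lambda)$ from the outset.
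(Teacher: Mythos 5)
Your proof is correct, and it is the standard Chernoff bound argument, which is precisely how the cited reference (Mitzenmacher--Upfal, Thm.~5.4) establishes this inequality; the paper itself simply cites the result without giving a proof. Nothing to add.
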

\subsection{Integral estimates}
Recall $1/(z\Gamma(z))$ is an entire function.
\begin{lem}\label{lem:gamma}
Let $G(z) = 1/(z\Gamma(z))$. We have $|G'(z)|,|G(z)| \le C(A+1)^{CA}$ for $|z| \le A$.
\end{lem}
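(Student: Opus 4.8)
The plan is to prove bounds on $G(z) = 1/(z\Gamma(z))$ and its derivative $G'(z)$ in the disc $|z| \le A$ by exploiting the fact that $G$ is entire, so that we may use the Cauchy estimates together with a crude bound on $|G|$ along a slightly larger circle. Concretely, the Cauchy integral formula on the circle $|w| = A+1$ gives
\[
G'(z) = \frac{1}{2\pi i} \oint_{|w| = A+1} \frac{G(w)}{(w-z)^2}\,dw, \qquad |z| \le A,
\]
so that $|G'(z)| \le (A+1)\max_{|w|=A+1}|G(w)|$, since $|w-z| \ge 1$ on the contour. Thus everything reduces to a sup bound on $|G(w)|$ for $|w| \le A+1$, and the same bound (without the extra factor) controls $|G(z)|$ directly.

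To bound $|G(w)| = 1/|w\,\Gamma(w)|$ on $|w| \le A+1$, I would use the Weierstrass product
\[
\frac{1}{\Gamma(w)} = w e^{\gamma w} \prod_{n=1}^{\infty} \left(1 + \frac{w}{n}\right) e^{-w/n},
\]
so that $G(w) = e^{\gamma w} \prod_{n \ge 1} (1 + w/n) e^{-w/n}$. Taking absolute values, $|e^{\gamma w}| \le e^{\gamma(A+1)}$, and for the product we bound $|1 + w/n| \le 1 + (A+1)/n \le \exp((A+1)/n)$ while $|e^{-w/n}| \le e^{(A+1)/n}$; the tail of $\sum_n ((A+1)/n - \text{something})$ does not telescope nicely, so instead I would split at $n \le A+1$ and $n > A+1$. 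For $n \le \lceil A+1\rceil$ there are at most $A+2$ factors, each of size at most $1 + (A+1)/n \le A+2$ times $e^{(A+1)/n} \le e^{A+1}$, contributing at most $(A+2)^{A+2} e^{(A+1)H_{\lceil A+1 \rceil}} \le (A+2)^{A+2} e^{(A+1)(1 + \log(A+2))}$. For $n > A+1$ one uses $|\log((1+w/n)e^{-w/n})| = |\sum_{j \ge 2} (-1)^{j-1} (w/n)^j / j| \le \sum_{j \ge 2} ((A+1)/n)^j \le ((A+1)/n)^2 / (1 - (A+1)/n) \le 2(A+1)^2/n^2$, and summing over $n > A+1$ gives $\sum_{n > A+1} 2(A+1)^2/n^2 \le 2(A+1)$. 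Combining, $|G(w)| \le e^{\gamma(A+1)} (A+2)^{A+2} e^{(A+1)(1+\log(A+2))} e^{2(A+1)}$, which is of the form $C(A+1)^{CA}$ for an absolute constant $C$ (absorbing the $e^{c(A+1)}$ factors, since $e^{cA} \le (A+1)^{cA/\log 2}$ once $A \ge 1$, and the region $A < 1$ is trivially bounded).

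The main obstacle is purely bookkeeping: making sure the final exponential-type bound is genuinely of the shape $C(A+1)^{CA}$ with a single absolute constant, uniformly down to small $A$ (including $A < 1$, where one just notes $G$ and $G'$ are bounded on any fixed disc). There is no analytic difficulty — $G$ is entire, the Weierstrass product converges locally uniformly, and all estimates are elementary — so the proof is short once the contour-plus-product strategy is in place. An alternative, if one prefers to avoid the infinite product, is to bound $|\Gamma(w)|$ from below on $|w| = A+1$ away from the poles and handle the poles separately, but the Weierstrass product is cleaner since it builds in the cancellation at the poles automatically.
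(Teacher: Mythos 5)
Your approach is correct in substance but differs from the paper's in one respect: the paper simply cites the standard growth estimate for $1/\Gamma$ (Stein--Shakarchi, Ch.~6, Thm.~1.6: $|1/\Gamma(s)| \le c_1 e^{c_2|s|\log|s|}$), notes that $G(z) = 1/\Gamma(z+1)$, and is done with the $G$ bound, while you reprove that growth estimate from scratch via the Weierstrass product. For $G'$, both you and the paper use Cauchy's integral formula on a circle of radius $A+1$, which gives the extra factor $(A+1)$ that is harmlessly absorbed into $C(A+1)^{CA}$. Your route is more self-contained; the paper's is shorter because it leans on a textbook theorem that does exactly the needed work once one observes $z\Gamma(z) = \Gamma(z+1)$.

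One small slip in your product estimate: for the tail you write
\[
\left|\log\!\bigl((1+w/n)e^{-w/n}\bigr)\right| \le \frac{((A+1)/n)^2}{1 - (A+1)/n} \le \frac{2(A+1)^2}{n^2},
\]
but the second inequality requires $(A+1)/n \le 1/2$, i.e.\ $n \ge 2(A+1)$, whereas you only assumed $n > A+1$. For $A+1 < n < 2(A+1)$ the denominator $1-(A+1)/n$ can be as small as $\sim 1/(A+2)$, and the bound fails. The fix is to move the split point to $n > 2(A+1)$: the initial block now has at most $2(A+1)$ factors, each still bounded by $(A+2)e^{A+1}$, so the contribution $\bigl((A+2)e^{A+1}\bigr)^{2(A+1)}$ is still of the form $C(A+1)^{CA}$, and the tail estimate is then legitimate. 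With that adjustment your argument goes through.
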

\begin{proof}
The bound for $G$ is \cite[Ch.~6, Thm.~1.6]{stein2003} and the bound for $G'$ follows from the one for $G$ by Cauchy's integral formula.
\end{proof}
\begin{lem}\label{lem:binom and gamma}
	Fix $A>0$. For all $|z|\le A$ and $n \ge 1$ we have
	\begin{equation}\label{eq:binom}
	\left| \binom{n+z-1}{n} - \frac{n^{z-1}}{\Gamma(z)} \right|  \le C (A+1)^{CA}  n^{\Re z-2}.
	\end{equation}
\end{lem}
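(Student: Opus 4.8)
The plan is to derive the bound for $\binom{n+z-1}{n}$ by writing it in terms of the Gamma function and then estimating the correction to the leading asymptotic $n^{z-1}/\Gamma(z)$. First I would use the identity
\[
\binom{n+z-1}{n} = \frac{\Gamma(n+z)}{\Gamma(n+1)\Gamma(z)} = \frac{1}{\Gamma(z)}\cdot\frac{\Gamma(n+z)}{\Gamma(n+1)},
\]
so that the claim reduces to showing
\[
\left| \frac{\Gamma(n+z)}{\Gamma(n+1)} - n^{z-1} \right| \le C(A+1)^{CA}\, n^{\Re z - 2}\,|\Gamma(z)|
\]
— but this is awkward because $|\Gamma(z)|$ can be large or small; it is cleaner to work with $G(z) = 1/(z\Gamma(z))$ from Lemma~\ref{lem:gamma} and the reflection-free bound it provides, and to keep $1/\Gamma(z) = zG(z)$ as a factor throughout. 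So I would instead aim directly at $|\Gamma(n+z)/\Gamma(n+1) - n^{z-1}|$ and multiply by $|1/\Gamma(z)| = |z||G(z)| \le A\,C(A+1)^{CA}$ at the very end.

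The core estimate is the ratio $\Gamma(n+z)/\Gamma(n+1)$ versus $n^{z-1}$. I would use the standard asymptotic expansion of the ratio of Gamma functions: for fixed $z$ with $|z|\le A$,
\[
\frac{\Gamma(n+z)}{\Gamma(n+1)} = \frac{\Gamma(n+z)}{\Gamma(n+1)},
\]
and more usefully $\log \Gamma(n+z) - \log\Gamma(n+1) = (z-1)\log n + O_A(1/n)$, which follows from Stirling's formula $\log\Gamma(w) = (w-\tfrac12)\log w - w + \tfrac12\log(2\pi) + O(1/|w|)$ applied at $w = n+z$ and $w = n+1$, expanding $\log(n+z) = \log n + z/n + O(z^2/n^2)$ and collecting terms; the constant and the $\log(2\pi)$ cancel, the $(w-\tfrac12)\log w$ terms combine to $(z-1)\log n + O_A(1/n)$, and the $-w$ terms contribute $-(z-1)$, wait — one must be careful: the clean statement is $\Gamma(n+z)/\Gamma(n+1) = n^{z-1}(1 + O_A(1/n))$, equivalently $\Gamma(n+a)/\Gamma(n+b) = n^{a-b}(1+O((|a|^2+|b|^2+1)/n))$, which is a textbook fact (e.g.\ \cite[Ch.~II.0]{tenenbaum2015} or direct from Stirling). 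Tracking the dependence on $A$ through Stirling gives the error as $C_A/n$ with $C_A$ of the shape $C(A+1)^{CA}$ after one checks that the Taylor coefficients of $\log(1+z/n)$ contribute powers of $|z|\le A$. Then $\Gamma(n+z)/\Gamma(n+1) - n^{z-1} = n^{z-1}\cdot O_A(1/n)$, and $|n^{z-1}| = n^{\Re z - 1}$, so this difference is $O_A(n^{\Re z - 2})$.

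Putting it together: $\binom{n+z-1}{n} - n^{z-1}/\Gamma(z) = \frac{1}{\Gamma(z)}\bigl(\Gamma(n+z)/\Gamma(n+1) - n^{z-1}\bigr)$, whose absolute value is at most $|1/\Gamma(z)| \cdot C_A n^{\Re z - 2}$. Since $1/\Gamma(z) = zG(z)$, Lemma~\ref{lem:gamma} gives $|1/\Gamma(z)| \le A\cdot C(A+1)^{CA} \le C(A+1)^{CA+1}$, and absorbing this into the constant yields the claimed bound $C(A+1)^{CA} n^{\Re z - 2}$ (adjusting $C$). The main obstacle is bookkeeping the dependence on $A$ in the Stirling remainder so that it comes out in the form $C(A+1)^{CA}$ rather than, say, $e^{CA}$ or $A!$; this requires either quoting a version of the Gamma-ratio asymptotic with explicit uniform-in-$z$ error, or running Stirling by hand and bounding $\sum_{j\ge 2} |z|^j/(j n^{j-1})$ and similar tails, which is elementary but needs care for small $n$ (the case $n=1$, or more generally $n$ small compared to $A$, where $n^{\Re z - 2}$ may be tiny while the left side is bounded — here one falls back on the crude bound $|\binom{n+z-1}{n}| \le C(A+1)^{CA}$ and $|n^{z-1}/\Gamma(z)| \le C(A+1)^{CA}$ valid for all $n\ge 1$, which covers $n \le n_0(A)$, while the Stirling argument handles $n > n_0(A)$). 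A short alternative for the hard range avoids Stirling entirely: write $\Gamma(n+z)/\Gamma(n+1) = \prod_{j=1}^{n} \frac{j-1+z}{j} \cdot \frac{\Gamma(z)}{\Gamma(1)}$ — actually $\frac{\Gamma(n+z)}{\Gamma(n+1)} = \frac{\Gamma(z)}{\Gamma(1)}\prod_{j=1}^{n}\frac{z+j-1}{j}$ is not quite right either; the clean telescoping is $\binom{n+z-1}{n} = \prod_{j=1}^{n}\frac{z+j-1}{j}$, so $\log\binom{n+z-1}{n} = \sum_{j=1}^{n}\log(1 + (z-1)/j) = (z-1)\log n + \gamma(z-1) + O_A(1/n)$ with the constant being $(z-1)\gamma$ plus $\sum_j[\log(1+(z-1)/j) - (z-1)/j]$, which converges and, upon exponentiating, is exactly $1/\Gamma(z)$ by the Weierstrass product — giving $\binom{n+z-1}{n} = \frac{n^{z-1}}{\Gamma(z)}(1+O_A(1/n))$ directly, with the $A$-dependence controlled by bounding the tail $\sum_{j>n}|\,\log(1+(z-1)/j)-(z-1)/j\,| \le \sum_{j>n} C|z-1|^2/j^2 \le CA^2/n$ and the Weierstrass-product partial error similarly. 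I would present this telescoping route as the main argument since it directly produces the $1/\Gamma(z)$ factor and makes the $A$-dependence transparent, reserving the crude bound for $n$ below a threshold depending on $A$.
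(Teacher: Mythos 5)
Your primary route is essentially the paper's own proof: write $\binom{n+z-1}{n}=\Gamma(n+z)/(\Gamma(n+1)\Gamma(z))$, show $\Gamma(n+z)/\Gamma(n+1)=n^{z-1}\bigl(1+O((A+1)^{CA}/n)\bigr)$ by Stirling once $n$ exceeds a threshold $n_0(A)$ of size $O(A)$ (the paper uses $n\ge 2A+1$, so that $\Re(n+z)\ge n/2$ keeps Stirling applicable), bound $|1/\Gamma(z)|$ by Lemma~\ref{lem:gamma}, and fall back on crude bounds of every factor — including $n^{2-\Re z}\le n_0(A)^{A+2}$ — when $n<n_0(A)$; all of this matches the paper step for step, including the explicit side remark about non-positive integer $z$ (where $\binom{n+z-1}{n}$ and $1/\Gamma(z)$ vanish or are harmlessly small). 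Your alternative telescoping route via $\binom{n+z-1}{n}=\prod_{j=1}^{n}(1+(z-1)/j)$ and the Weierstrass product for $1/\Gamma$ is a genuinely different, and arguably more self-contained, derivation of the same asymptotic — it does produce the $1/\Gamma(z)$ factor organically and makes the $A$-dependence visible as $\sum_{j>n}|z-1|^2/j^2\le CA^2/n$ — but as sketched it has a branch-of-logarithm hazard: for $j\le |z-1|$ the quantity $1+(z-1)/j$ may lie on the negative real axis or vanish, so $\sum_{j=1}^n\log(1+(z-1)/j)$ is not well defined termwise. The clean fix is to compare the two sides multiplicatively from the start, i.e., write
\begin{equation}
\frac{\binom{n+z-1}{n}}{n^{z-1}/\Gamma(z)}=e^{(z-1)(H_n-\log n-\gamma)}\prod_{j>n}\frac{e^{(z-1)/j}}{1+(z-1)/j},
\end{equation}
so the problematic small-$j$ factors cancel and only $j>n\ge 2A$ remain, where $|(z-1)/j|<1$; then the product is $1+O(A^2/n)$ and the prefactor is $1+O((A+1)/n)$. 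With that repair the telescoping argument is correct, but it buys nothing over the Stirling computation, which is shorter and is what the paper uses.
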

\begin{proof}
For $z$ a non-positive integer, the left-hand side is $0$ or sufficiently small. Otherwise, dividing by $n^{\Re z-2}$, it suffices to bound
\begin{equation}
\left|\frac{\Gamma(n+z)}{\Gamma(n+1)\Gamma(z)n^{z-2}} - \frac{n}{\Gamma(z)} \right|,
\end{equation}
where $|z| \le A$ and $z \neq 0,-1,\ldots$. If $n\ge 2A+1$, $\re (n+z) \ge n/2$ and we may apply Stirling's approximation to find $\Gamma(n+z)/(\Gamma(n+1)n^{z-2}) = n+O((A+1)^{CA})$ and the desired bound follows from Lemma~\ref{lem:gamma}. If $n < 2A+1$, the terms $n$, $|1/n^{z-2}|$ and $|\Gamma(n+z)/\Gamma(z)|=|(n+z-1)(n+z-2)\cdots (z)|$ are all bounded from above by $O((A+1)^{CA})$, as well as $|1/\Gamma(z)|$, $1/\Gamma(n+1)$ by Lemma~\ref{lem:gamma}, which finishes the proof.
\end{proof}
For the rest of this section, let $X=X_n$ be a Poisson random variable with mean $\log n$.
\begin{lem}\label{lem:3integrals}
Let $n \ge k>1$ and set $r=(k-1)/\log n$. Let $\beta$ be the circle $|z|=r$ oriented counterclockwise. For $j \ge 0$ we have
\begin{align}\label{eq:int 1}
\int_{\beta} \left| \frac{(z-r)^j n^{z-1}}{z^{k}}\right| \, \left| dz\right|&\le C_j \PP( X=k-1) \left( \frac{\sqrt{k}}{\log n}\right)^j,\\
\label{eq:int gamma}\int_{\beta} \left| \frac{(z-r)^j n^{z-1}}{z^{k+1}\Gamma(z)}\right| \, \left| dz\right|&\le C_j\PP( X=k-1) \left( \frac{\sqrt{k}}{\log n}\right)^j (r+1)^{Cr},
\end{align}
\begin{equation}\label{eq:int 2}
\int_{\beta}\frac{(z-r) n^{z-1}}{z^{k}} \,  dz=0.
\end{equation}
\end{lem}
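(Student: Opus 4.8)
The plan is to treat the three assertions in increasing order of difficulty, with \eqref{eq:int 1} being the heart of the matter. The identity \eqref{eq:int 2} is a residue computation: writing $(z-r)n^{z-1}/z^{k}=n^{z-1}/z^{k-1}-r\,n^{z-1}/z^{k}$ and using that $n^{z-1}=n^{-1}\sum_{l\ge 0}(\log n)^{l}z^{l}/l!$ is entire, the residue theorem gives $\int_{\beta}n^{z-1}z^{-l-1}\,dz=2\pi i(\log n)^{l}/(n\,l!)$ for every integer $l\ge 0$; taking $l=k-2$ and $l=k-1$ (legitimate since $k\ge 2$) and invoking $r\log n=k-1$ makes the two contributions cancel. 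The bound \eqref{eq:int gamma} reduces to \eqref{eq:int 1}: since $1/(z^{k+1}\Gamma(z))=G(z)/z^{k}$ with $G(z)=1/(z\Gamma(z))$, and $|G(z)|\le C(r+1)^{Cr}$ on the circle $|z|=r$ by Lemma~\ref{lem:gamma}, we get $\int_{\beta}|(z-r)^{j}n^{z-1}z^{-k-1}\Gamma(z)^{-1}|\,|dz|\le C(r+1)^{Cr}\int_{\beta}|(z-r)^{j}n^{z-1}z^{-k}|\,|dz|$.

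For \eqref{eq:int 1} I would parametrize $\beta$ by $z=re^{i\theta}$, $\theta\in[-\pi,\pi]$, so that $|dz|=r\,d\theta$, $|z-r|=r|e^{i\theta}-1|=2r\sin(|\theta|/2)$, and $|n^{z-1}|=n^{r\cos\theta-1}=e^{(k-1)\cos\theta}/n$ because $n^{r}=e^{r\log n}=e^{k-1}$. This turns the left-hand side into
\[
\frac{2^{j+1}r^{j+1-k}}{n}\int_{0}^{\pi}\sin^{j}(\theta/2)\,e^{(k-1)\cos\theta}\,d\theta .
\]
I would then bound this real integral by the substitution $u=\sin(\theta/2)$ (so $\cos\theta=1-2u^{2}$, $d\theta=2\,du/\sqrt{1-u^{2}}$), which turns it into $2e^{k-1}\int_{0}^{1}u^{j}e^{-2(k-1)u^{2}}(1-u^{2})^{-1/2}\,du$. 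Splitting the $u$-integral at $u=1/2$: on $[0,1/2]$ the factor $(1-u^{2})^{-1/2}$ is bounded and the integral is $\le C_{j}\int_{0}^{\infty}u^{j}e^{-2(k-1)u^{2}}\,du=C_{j}(k-1)^{-(j+1)/2}$, while on $[1/2,1]$ one has $e^{-2(k-1)u^{2}}\le e^{-(k-1)/2}$, which is $\le C_{j}(k-1)^{-(j+1)/2}$ since $e^{-m/2}m^{(j+1)/2}$ is bounded for $m\ge 1$. Hence the real integral is $\le C_{j}e^{k-1}(k-1)^{-(j+1)/2}$.

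It remains to recognize the prefactor in terms of $\PP(X=k-1)$. Since $\PP(X=k-1)=e^{-\log n}(\log n)^{k-1}/(k-1)!$ and $(\log n)^{k-1}=(k-1)^{k-1}r^{-(k-1)}$ (as $r\log n=k-1$), Stirling's bound $m!\le Cm^{m+1/2}e^{-m}$ yields $\PP(X=k-1)\ge e^{k-1}/\bigl(Cn\sqrt{k-1}\,r^{k-1}\bigr)$. Combining this with $r^{j+1-k}=r^{j-(k-1)}$ and $(\sqrt{k}/\log n)^{j}\ge(\sqrt{k-1}/\log n)^{j}=r^{j}(k-1)^{-j/2}$, the powers of $r$ cancel and the power of $k-1$ matches, so that $\int_{\beta}|(z-r)^{j}n^{z-1}z^{-k}|\,|dz|\le C_{j}\PP(X=k-1)(\sqrt{k}/\log n)^{j}$, which is \eqref{eq:int 1}.

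The hard part, such as it is, is purely technical: the estimate of \eqref{eq:int 1} is a Laplace-/saddle-point-type computation, the integrand on $\beta$ peaking at $z=r$ — the zero of $(z-r)^{j}$ — with Gaussian width of order $(k-1)^{-1/2}$, which is what produces the gain $(\sqrt{k}/\log n)^{j}$. The two places needing a little care are the integrable singularity of $(1-u^{2})^{-1/2}$ at $u=1$, handled by splitting off $[1/2,1]$ where the Gaussian factor is exponentially small, and bookkeeping with Stirling so that the bound comes out in exactly the normalization $\PP(X=k-1)$ rather than with spurious powers of $k$.
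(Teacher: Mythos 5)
Your proposal is correct and follows essentially the same route as the paper: for \eqref{eq:int 2} a residue/Taylor-coefficient computation showing the coefficient of $z^{k-1}$ in $(z-r)n^{z-1}$ vanishes; for \eqref{eq:int gamma} a reduction to \eqref{eq:int 1} via the bound on $G(z)=1/(z\Gamma(z))$ from Lemma~\ref{lem:gamma}; and for \eqref{eq:int 1} a Laplace-type estimate on the circle $|z|=r$ with the Gaussian peak at $z=r$, followed by Stirling to recover $\PP(X=k-1)$. The only cosmetic difference is that you use the exact substitution $u=\sin(\theta/2)$ (and thus must handle the integrable singularity of $(1-u^2)^{-1/2}$ near $u=1$ by splitting), whereas the paper uses the simpler inequalities $|e^{it}-1|\le|t|$ and $\cos t-1\le -ct^2$ to land on a Gaussian integral directly; both give the same prefactor $n^{r-1}r^{j}/r^{k-1}=e^{k-1}r^{j+1-k}/n$ and the same $(k-1)^{-(j+1)/2}$ decay.
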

\begin{proof}
Using the parametrization $z=re^{it}$ and the estimate $\cos t -1 \le -ct^2$ for $t \in [-\pi,\pi]$,
\begin{equation}
\int_{\beta} \left| \frac{(z-r)^j n^{z-1}}{z^{k}}\right| \, \left| dz\right|\le \frac{n^{r-1}r^j}{r^{k-1}} \int_{-\pi}^{\pi} \left| e^{it} -1 \right|^j n^{-rct^2}\, dt \le \frac{n^{r-1}r^j}{r^{k-1}} \int_{-\pi}^{\pi}\left|t\right|^j n^{-rct^2}\, dt,
\end{equation}
and we conclude \eqref{eq:int 1} by using the change of variables $(k-1)t^2=s^2$ and Stirling's approximation. To obtain \eqref{eq:int gamma} we repeat the computation and appeal to Lemma~\ref{lem:gamma}. To obtain \eqref{eq:int 2}, observe that the coefficient of $z^{k-1}$ in $(z-r)n^{z-1}$ is 
\begin{equation}
n^{-1} \left( \frac{(\log n)^{k-2}}{(k-2)!} - r \frac{(\log n)^{k-1}}{(k-1)!} \right) =0, 
\end{equation}
as needed.
\end{proof}
\begin{figure}[!tbp]
	\hspace*{\fill}
	\begin{minipage}[b]{0.45\textwidth}
		\begin{tikzpicture}
			\begin{axis}[ticks=none,axis lines=middle,xmin=-2,xmax=2,ymin=-2,ymax=2,axis equal image]
				\draw [dash pattern=on 1pt off 3pt, domain=-180:180] plot({cos(\x)},{sin(\x)});
				\addplot[domain=1:1.495]{-0.15};
				\addplot[domain=1:1.495]{0.15};
				\draw [samples =100,domain=5.7:354.3] plot({1.5*cos(\x)},{1.5*sin(\x)});
				\draw [domain=90:270] plot({1+0.15*cos(\x)},{0.15*sin(\x)});
				\node[text width=0cm] at (0.9,0.3)
				{$\gamma _1 $};
				\node[text width=0cm] at (1,1.2)
				{$\gamma _2 $};
				\draw[->,thick] (-1.059,1.059) -- (-1.059-0.001,1.059-0.001) node [pos=0.66,above]{} ;
				\draw[->,thick] (-1.059,-1.059) -- (-1.059+0.001,-1.059-0.001) node [pos=0.66,above]{} ;
				\draw[->,thick] (1.059,1.059) -- (1.059-0.001,1.059+0.001) node [pos=0.66,above]{} ;
				\draw[->,thick] (1.059,-1.059) -- (1.059+0.001,-1.059+0.001) node [pos=0.66,above]{} ;
				\draw[->,thick] (1.059,-1.059) -- (1.059+0.001,-1.059+0.001) node [pos=0.66,above]{} ;
			\end{axis}
		\end{tikzpicture}
	\end{minipage}
	\hspace*{\fill}
	\begin{minipage}[b]{0.45\textwidth}
		\begin{tikzpicture}
			\begin{axis}[ticks=none,axis lines=middle,xmin=-2,xmax=2,ymin=-2,ymax=2,axis equal image]
				\addplot[domain=0:1.3]{0.5};
				\addplot[dash pattern=on 1pt off 2pt,domain=1.3:1.8]{0.5};
				\addplot[dash pattern=on 1pt off 2pt,domain=1.3:1.8]{-0.5};
				\addplot[domain=0:1.3]{-0.5};
				\draw[samples =70, domain=90:270] plot({0.5*cos(\x)},{0.5*sin(\x)});
				\node[text width=0cm] at (1,0.6)
				{$\gamma_3 $};
				\draw[->,thick] (0.5,-0.5) -- (0.5-0.001,-0.5) node [pos=0.66,above]{} ;
				\draw[->,thick] (0.5-0.001,0.5) -- (0.5,0.5) node [pos=0.66,above]{} ;
			\end{axis}
		\end{tikzpicture}
	\end{minipage}
	\caption{The contour $\gamma= \gamma _1+\gamma _2$ in the $u$-plane and the contour $\gamma _3$ in the $v$-plane.}
	\label{fig:gammas figure}
\end{figure}
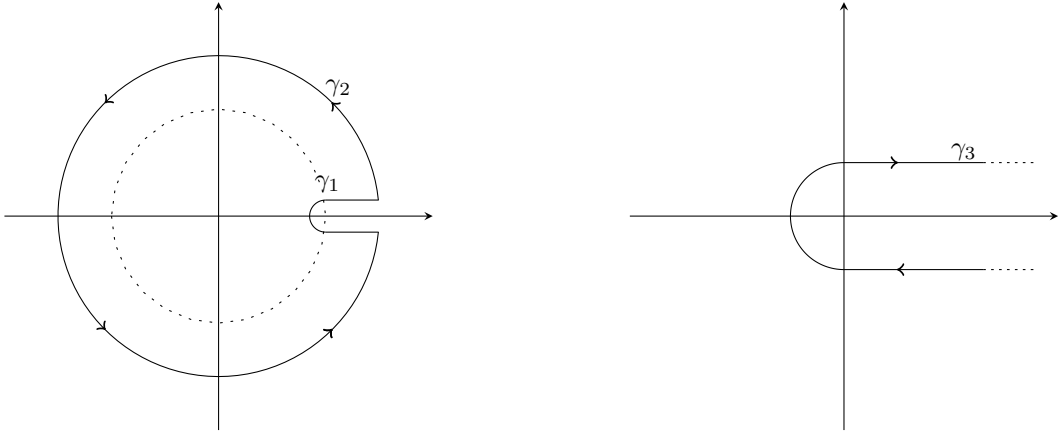

\begin{prop}\label{prop:I integral}
Let $n \ge k > 1$. Let $\beta$ be the circle $|z|=r$ oriented counterclockwise in the $z$-plane. Let $\gamma$ be the path in the $u$-plane depicted in Figure~\ref{fig:gammas figure}. In formulas, $\gamma$ is oriented counterclockwise as well, and we write it as a union of two curves, $\gamma_1$ and $\gamma_2$. Let $R=1+1/\sqrt{n}$ and define $\theta_1 \in (0,\pi)$ by $R \sin (\theta_1) = 1/n$. The curve $\gamma_1$ is $\gamma_1'+\gamma_1''+\gamma_1'''$, with \begin{equation}
\begin{split}
\gamma_1'(t) &= -\frac{i}{n}-t , \qquad t \in [-R\cos(\theta_1),-1],\\
\gamma_1''(\theta) &= 1+\frac{e^{i(2\pi-\theta)}}{n}, \qquad \theta \in [\pi/2, 3\pi/2],\\
\gamma_1'''(t) &= \frac{i}{n}+t , \qquad t \in [1,R\cos(\theta_1)],
\end{split}
\end{equation}
and $\gamma_2$ given by
\begin{equation}
\gamma_2(\theta) = R e^{i\theta}, \qquad \theta \in [\theta_1, 2\pi - \theta_1].
\end{equation} 	 
We have
	\begin{equation}\label{eq:int knq}
	\intop _{\beta } \intop _{\gamma } \frac{\left| (1-u)^{-z}\right|}{|u|^{n+1}|z|^{k+1}}  |u-1|  |du||dz| \le C\PP( X=k-1)(r+1)^{Cr}\frac{\log n}{nk}.
	\end{equation}
\end{prop}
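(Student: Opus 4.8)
The plan is to estimate the double integral in \eqref{eq:int knq} by splitting the $u$-contour $\gamma = \gamma_1 + \gamma_2$ and handling each piece separately, reducing everything to the one-dimensional integral estimates of Lemma~\ref{lem:3integrals}. Write the integral as $I_1 + I_2$, where $I_j$ is the contribution of $\gamma_j$. The key observation is that on each piece we can bound $|(1-u)^{-z}| = \exp(-\Re(z\log(1-u)))$ pointwise, pull out the part depending only on $z$, and then integrate over $\beta$ using \eqref{eq:int 1} with $j=0$, which gives a factor $\PP(X=k-1)$.

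For the large circle $\gamma_2(\theta) = Re^{i\theta}$ with $R = 1 + 1/\sqrt n$, first note that on $\gamma_2$ we have $|u| = R$, so $|u|^{-n-1} \le R^{-n} \le e^{-\sqrt n/2}$ roughly (more precisely $R^{-n} = (1+n^{-1/2})^{-n} \le \exp(-\sqrt n + O(1))$), while $|u - 1| \le R + 1 \le 3$ and $|1 - u| \ge R - 1 = 1/\sqrt n$, so $|\log(1-u)| \le C\log n$ and hence $|(1-u)^{-z}| \le \exp(C|z|\log n) = \exp(Cr\log n) = n^{Cr}$ on $\beta$. Since $r \le 3/2$ here? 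No—actually $r$ can be up to order $q$, but the exponential decay $e^{-\sqrt n}$ in $R^{-n}$ swamps $n^{Cr}$ only when $r$ is not too large; however, one should check that $n^{Cr} \le e^{\sqrt n /2}$ holds in the relevant range, or more carefully combine with $\PP(X=k-1)$. I expect the clean route is: bound $|(1-u)^{-z}| \le (R-1)^{-\Re z} \cdot e^{C} \le n^{\Re z /2} e^C$ using $|1-u|\ge R-1$ together with $\arg(1-u)$ bounded, so that $\int_\beta |(1-u)^{-z}||z|^{-k-1}|dz| \le e^C n^{r/2}\int_\beta |n^{z/2}| |z|^{-k-1}|dz|$, which by a variant of \eqref{eq:int 1} (replacing $n$ by $\sqrt n$) is controlled; then the length of $\gamma_2$ is $O(R) = O(1)$ and $|u|^{-n-1}\le R^{-n-1}$ supplies the decisive saving. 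This should make $I_2$ negligible compared to the target bound.

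The main contribution, and the main obstacle, is $I_1$: the contribution of the "keyhole" curve $\gamma_1$ hugging the cut $[1,\infty)$. On $\gamma_1'$ and $\gamma_1'''$ we have $u = \pm i/n + t$ with $t \in [1, R\cos\theta_1]$, so $1 - u = \mp i/n + (1-t)$ with $1 - t \le 0$; thus $1-u$ lies just below/above the negative real axis, $|1-u| \asymp |1-t| + 1/n$, and $\log(1-u) = \log|1-t + i/n| \pm i(\pi - O(1/n))$ roughly. Hence $|(1-u)^{-z}| = \exp(-\Re z \cdot \log|1-u| \pm \Im z \cdot \arg(1-u)) \le |1-t|^{-\Re z}(1 + O(1/n))^{|\Re z|} e^{\pi|\Im z|}$, and on $\beta$ we have $|\Im z|, |\Re z| \le r$, so $e^{\pi |\Im z|} \le e^{\pi r}$, absorbable into $(r+1)^{Cr}$ after noting $e^{\pi r} \le (r+1)^{Cr} \cdot C$. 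So after pulling out the $z$-independent parts, $I_1$ is bounded by
\begin{equation}
C(r+1)^{Cr}\Big( \int_1^{R\cos\theta_1} \frac{dt}{t^{n+1}} \int_\beta \frac{|1-t|^{-\Re z}|dz|}{|z|^{k+1}} + (\text{arc } \gamma_1'') \Big).
\end{equation}
Here one substitutes $1 - t = -s$, i.e. $t = 1+s$, giving $\int_0^{1/\sqrt n - O(1/n)} (1+s)^{-n-1} \int_\beta s^{-\Re z}|z|^{-k-1}|dz|\,ds$; writing $s^{-\Re z} = |s^{-z}|$ and recognizing $\int_\beta |s^{-z}||z|^{-k-1}|dz|$ as the integral \eqref{eq:int 1} with $n$ replaced by $1/s$ (so it equals $C\PP(\mathrm{Po}(\log(1/s)) = k-1)(\cdots)$), one is left with a one-dimensional $s$-integral; the factor $(1+s)^{-n-1}$ contributes $e^{-ns + O(ns^2)}$ and restricts effectively to $s \lesssim 1/n$, where $\log(1/s) \gtrsim \log n$ and a short computation shows the $s$-integral is $O(\PP(X=k-1)\log n/(nk))$, matching the target. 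The small arc $\gamma_1''$ around $u=1$ has length $O(1/n)$, $|u|^{-n-1} = O(1)$, $|1-u| = 1/n$ there, contributing a term of the same shape. The delicate point throughout is keeping the $(r+1)^{Cr}$ uniform while tracking the $\arg(1-u) \approx \pm\pi$ factor; I expect that is where the care is needed, and it is handled exactly as the factor $e^{Cr}$ versus $(r+1)^{Cr}$ is handled in \eqref{eq:int gamma}.
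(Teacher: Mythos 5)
The split into $I_1$ (contribution of $\gamma_1$) and $I_2$ (contribution of $\gamma_2$) matches the paper, and your treatment of $I_2$ — exploit $|u|^{-n-1}\le R^{-n-1}\approx e^{-\sqrt n}$ against the at-most-exponential growth of $|(1-u)^{-z}|$ — is essentially the paper's argument.

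The $I_1$ argument has a genuine gap. On the horizontal segments $\gamma_1',\gamma_1'''$ you write $u = \pm i/n + t$, so $|1-u| = \sqrt{(1-t)^2 + n^{-2}}$, and you bound $|1-u|^{-\Re z} \le |1-t|^{-\Re z}$ (valid for $\Re z > 0$ since $|1-u|\ge|1-t|$). But this discards the crucial $n^{-1}$ regularization coming from the keyhole's distance from the branch point. After the substitution $s = t-1$, your bound for $I_1$ contains $\int_0 (1+s)^{-n-1}\bigl(\int_\beta s^{-\Re z}|z|^{-k-1}|dz|\bigr)\,ds$. The inner integral contains a contribution of order $s^{-r}$ (from $z$ near the positive real point $z=r$ on $\beta$), so the $s$-integral diverges at $s=0$ whenever $r\ge 1$. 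Since $r = (k-1)/\log n$ is permitted to be of size up to $q(1-\delta)$, this makes your upper bound $+\infty$ over most of the range of interest; the quantity being bounded is finite, so you have simply been too lossy. There is a secondary issue as well: even where it converges, your identification of $\int_\beta |s^{-z}||z|^{-k-1}|dz|$ with eq.~\eqref{eq:int 1} ``with $n$ replaced by $1/s$'' does not go through because the radius of $\beta$ is tied to $r=(k-1)/\log n$, not to $(k-1)/\log(1/s)$.

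The paper avoids both problems by immediately rescaling $u = 1 + n^{-1}v$, so the entire contour $\gamma_1$ maps to a fixed Hankel-type contour $\gamma_3$ that stays at distance $\ge 1$ from the origin in the $v$-plane. One then has $|1-u|^{-\Re z} = n^{\Re z}|v|^{-\Re z}$ with $|v|\ge 1$, so the factor $n^{\Re z}$ comes out of the $v$-integral cleanly (and matches the $\beta$-integral giving $\PP(X=k-1)$), while $\int_{\gamma_3}|(-v)^{-z}||v||1+v/n|^{-n-1}|dv|$ is bounded by $C(r+1)^{Cr}$ uniformly in $n$ using $|1+v/n|^{-n-1}\lesssim e^{-c\Re v}$. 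If you keep the regularized form $|1-u| = n^{-1}\sqrt{(n(t-1))^2+1}$ in your computation rather than dropping the $n^{-1}$, you will be led to precisely this change of variables.
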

\begin{proof}
	Let $I_1$ and $I_2$ be the integrals over $\beta \times \gamma _1$ and $\beta \times \gamma _2$, respectively:
	\begin{equation}
	I_i := \intop _{\beta } \intop _{\gamma_i } \frac{\left| (1-u)^{-z}\right|}{|u|^{n+1}|z|^{k+1}}  |u-1|  |du||dz|, \qquad i=1,2.
	\end{equation}
	By performing the change of variables $u = 1+n^{-1}v $, we obtain
	\begin{equation}\label{eq:i1 gamma3}
	I_1= \frac{1}{n^2} \intop _{\beta } \frac{n^{\re z}}{|z|^{k+1}} \intop_{\gamma_3  } \frac{\left| (-v) ^{-z} \right| |v|}{|1+n^{-1}v|^{n+1}} |dv||dz|,
	\end{equation}
	where $\gamma_3$ is depicted in Figure~\ref{fig:gammas figure}. We continue by bounding the inner integral:
	\begin{equation}
\begin{split}
	\max_{|z|\le r}\int_{\gamma_3}  \left| (-v)^{-z}  \right| \left|v\right| \left|1+n^{-1}v\right|^{-(n+1)}\, |dv| &\le e^{\pi r} \max_{|z|\le r}\int_{\gamma_3}  \left| v \right|^{C(r+1)} |1+n^{-1}v|^{-(n+1)}\, |dv|  \\
	& \le e^{\pi r} \big( C + C\int_{0}^{\infty}  t^{C(r+1)}e^{-ct}  \, dt \big)  \\
	&\le e^{\pi r}\Gamma(C(r+1)) \le C (r+1)^{Cr}.
	\end{split}
	\end{equation}
	We substitute the last bound in \eqref{eq:i1 gamma3}, parametrize $\beta$ as $z=re^{it}$ and use the inequality $\re z \le r(1-ct^2)$, which leads to 
	\begin{equation}
	I_1\le \frac{ C(r+1)^{Cr}}{n^2 r^k} \intop_{-\pi }^{\pi } n^{r(1-ct^2)} \, dt= \frac{C(r+1)^{Cr} n^{r-2}}{ r^k \sqrt{r\log n}} \intop_{-\pi \sqrt{r \log n}}^{\pi \sqrt{r \log n}} e^{-cs^2}\,ds  \le \frac{C(r+1)^{Cr}  n^{r-2}}{ r^k \sqrt{r\log n}}.
	\end{equation}
	Thus, by (a weak version of) Stirling's approximation we obtain
	\begin{equation}
	I_1 \le C(r+1)^{Cr} \PP( X=k-1)\frac{\log n}{nk}.
	\end{equation} 
	
	We turn to bound $I_2$. On $\beta \times \gamma_2$ we have $\left|(1-u)^{-z} \right|\le C\exp(\pi r + k/2)$, and so 
	\[I_2 \le C \frac{\exp(\pi r + \frac{k}{2})}{R^n r^k} \le   C(r+1)^{Cr}\PP(X=k-1)\exp(-ck-c\sqrt{n}),\]
	where here we again apply Stirling. As both $I_1$ and $I_2$ are bounded by the right-hand side of \eqref{eq:int knq}, we conclude the proof. 
\end{proof}

\section{Proof of Theorem~\ref{thm:pointwise}}
For $k=1$, the result follows from \eqref{eq:ppt}, so we may suppose $k>1$. 
Fix $\delta \in (0,1)$ and suppose $r \le q(1-\delta)$, $q\ge(1-\delta)^{-2}$ and $n\ge 4(1-\delta)/\delta^2$ (so that $1+1/\sqrt{n} \le (1-\delta)^{-1/2}$). By Cauchy's integral formula, we have 
	\begin{equation}
	\begin{split}
	\PP \left(K (\pi_n)=k \right)&= \Big(\frac{1}{2\pi i } \Big)^2 \intop _{ \beta } \intop _{\gamma } \frac{(1-u)^{-z}}{u^{n+1} z^{k+1}} \, du\, dz,\\
	\PP \left(\Omega (f _n)=k \right)&=\Big(\frac{1}{2\pi i } \Big)^2 \intop _{ \beta } \intop _{\gamma } \frac{(1-u)^{-z}}{u^{n+1} z^{k+1}} H_q (u,z)\, du\,dz,
	\end{split}
	\end{equation}
where $\beta$ and $\gamma$ are as defined in Proposition~\ref{prop:I integral}. Recall that $h_q(\bullet )= H_q (1, \bullet )$. Thus,
	\begin{equation}\label{eq:diff}
\PP \left(\Omega (f_n)=k \right)-\mathbb P (K (\pi _n)=k) h_q ( r)=  \Big(\frac{1}{2\pi i } \Big)^2 \intop _{ \beta } \intop _{\gamma } \frac{(1-u)^{-z}}{u^{n+1} z^{k+1}} (H_q (u,z)-H_q(1,r))\, du\,dz.
	\end{equation}
We have 
	\begin{multline}
	H_q (u,z)- H_q(1, r ) = (H_q(u,z) - H_q(1,z)) + (H_q(1,z) - H_q(1,r)) = O_{r,\delta,q}(|u-1|) + H_q(1,z) - H_q(1,r), 
	\end{multline}
	where the implied constant is, by Lemma~\ref{lem:deriv2},
	\begin{equation}\label{eq:imp}
	C_{\delta} \frac{r^2+1}{q} \exp\left( C_{\delta} \frac{r^2}{q} \right) \le C_{\delta} \frac{r^2+1}{q} \exp\left( C_{\delta} r\right).
	\end{equation}
Proposition~\ref{prop:I integral} shows that the total contribution of the $O_{r,\delta,q}(|u-1|)$-term to the right-hand side of \eqref{eq:diff} is acceptable. Since the $n$th coefficient of $(1-u)^{-z}$ is $\binom{n+z-1}{n}$, we can reduce to problem to a problem in the $z$-plane, namely bounding
	\begin{equation}
\intop _{ \beta } \intop _{\gamma } \frac{(1-u)^{-z}}{u^{n+1} z^{k+1}} (H_q (1,z)-H_q(1,r))\, du\,dz = 	\int_{\beta} \frac{\binom{n+z-1}{n} (H_q(1,z)-H_q(1,r))}{z^{k+1}}\, dz .
	\end{equation}
By Lemmas~\ref{lem:deriv2}, \ref{lem:binom and gamma} and \ref{lem:3integrals}, we may replace $\binom{n+z-1}{n}$ with $n^{z-1}/\Gamma(z)$ and $H_q(1,z)-H_q(1,r)$ with $(z-r) (\frac{\partial}{\partial z}H_q)(1,r) + O_{r,\delta,q}((z-r)^2)$ (the implied constant being again \eqref{eq:imp}), and the error terms will be acceptable. To bound the remaining integral, we use a first-order Taylor approximation for $G(z) = 1/(z\Gamma(z))$ to write
	\begin{equation}
\int_{\beta} \frac{n^{z-1} (z-r)}{\Gamma(z) z^{k+1}}\, dz = \frac{1}{\Gamma(r)r} \int_{\beta} \frac{n^{z-1} (z-r)}{z^k}\, dz + O\left( \max_{|t| \le  r}| G'(t)|\int_{\beta} \left| \frac{n^{z-1} (z-r)^{2}}{z^k}\right| \, \left|dz\right| \right).
\end{equation}
The main term vanishes by \eqref{eq:int 2}, and the error term is small enough by \eqref{eq:int 1} and Lemma~\ref{lem:gamma}. This finishes the proof. \qed
\section{Proof of Corollary~\ref{cor:pointwise}}
For $n \le 100$, the result follows from \eqref{eq:q lim} since $h_q(r) = 1+O(1/q)$ for $r \le 3/2$ by Lemma~\ref{lem:deriv2}. Otherwise, let us take $\delta = 1/5$ in Theorem~\ref{thm:pointwise} and obtain
\begin{equation}
\PP \left(\Omega (f_{n})=k \right) - \PP (K(\pi _n)=k) h_q(r) = O\left (\frac{\PP(X=k-1)k}{q(\log n)^2}\right)
\end{equation}
for all $n \ge 100$ and $q \ge 2$. The proof is finished by noting that $\PP(X=k-1) = O\left( \PP(K(\pi_n)=k)\right)$ uniformly in the range $k \le 3\log n/2$ by \eqref{eq:omega perm intro}. \qed
\section{Proof of Corollary~\ref{cor:tv}}
We may assume $n \ge C$, since for any fixed $n$ the following argument works. An upper bound of $O_n(1/q)$ on the total variation follows from Remark~\ref{rem:q tv}, while a lower bound of order $1/q$ follows from considering the contribution of $k=n$: 
\begin{equation}
\left| \PP(\Omega(f_n)=n) - \PP(K(\pi_n)=n) \right| = \frac{\binom{q+n-1}{n}}{q^n} - \frac{1}{n!} = \frac{1}{n!} \left( \prod_{i=1}^{n-1} \left( 1+ \frac{i}{q} \right) - 1 \right) \ge \frac{1}{q} \frac{1}{n!} \binom{n}{2}.
\end{equation}
Let $I_1 = [1, 3\log n/2]$, $I_2 = (3\log n/2, \sqrt{q} \log n]$, $I_3 = (\sqrt{q}\log n,n]$. For $1 \le i \le 3$, let $S_i$ be the contribution of $k \in I_i$ to the total variation:
\begin{equation}
S_i = \sum_{k \in I_i} \left| \PP(\Omega(f_n)=k) - \PP(K(\pi_n)=k) \right|.
\end{equation}
We shall show that $S_i = O(1/(q\sqrt{\log n}))$ for each $i$. Observe that $1=h_q(1)$ and that $h_q'(z) = O(1/q)$ for $|z| \le 3/2$ by Lemma~\ref{lem:deriv2}. By Theorem~\ref{thm:pointwise} and the estimate $h_q(z)-h_q(1) = O((z-1)/q)$,
\begin{equation}
\begin{split}
S_1 &= \sum_{k \in I_1} \left| \PP(K(\pi_n)=k) \left( h_q(r) - h_q(1) \right)+ O\left( \frac{\PP(X=k-1)k}{q(\log n)^2} \right) \right|\\
& \le \frac{C}{q}\left( \sum_{k \in I_1}  \PP(K(\pi_n)=k) \left| r -1 \right| + \sum_{k \in I_1}  \frac{\PP(X=k-1)k}{(\log n)^2} \right).
\end{split}
\end{equation}
From \eqref{eq:omega perm intro} we deduce the upper bound $\PP( K(\pi_n)=k) \le C  \PP( X = k-1)$ for $k \le 3 \log n/2$, so that
\begin{equation}
S_1 \le \frac{C}{q} \sum_{k \in I_1}  \PP(X=k-1) \left( \left| \frac{k-1}{\log n} -1 \right| +  \frac{k}{(\log n)^2} \right) \le \frac{C}{q} \left( \frac{\EE |X - \log n |}{\log n} + \frac{\EE X  + 1}{(\log n)^2} \right) \le \frac{C}{q\sqrt{\log n}},
\end{equation}
where the last inequality uses Cauchy-Schwarz: $\EE |X - \log n | \le \Var(X)^{1/2} = \sqrt{\log n}$. For $k\in I_2$, we have $h_q(r)-1 = O(r^3/q)$ by Lemma~\ref{lem:deriv2} with $\delta = 1/5$. By Theorem~\ref{thm:pointwise} with $\delta=1/5$,
\begin{equation}\label{eq:s2 bnd}
S_2 \le \frac{C}{q}\left( \sum_{k \in I_2}  \PP(K(\pi_n)=k) r^3 + \sum_{k \in I_2}  \frac{\PP(X=k-1)k}{(\log n)^2} (r+1)^{Cr} \right).
\end{equation}
We bound the first sum using Cauchy-Schwarz:
\begin{equation}
\sum_{k \in I_2} \PP(K(\pi_n)=k)r^3 \le \frac{\EE K^3(\pi_n) \cdot \mathbf{1}_{K(\pi_n)>3\log n/2}}{(\log n)^3}  \le \frac{\sqrt{\EE K^6(\pi_n) \PP( K(\pi_n) >3 \log n/2) }}{(\log n)^3} .
\end{equation}
By Markov's inequality and $\EE2^{K(\pi_n)} = n+1$ \cite[Thm.~13.3]{vanlint2001}, we have 
\[\PP( K(\pi_n) > 3 \log n/2) =\PP( 2^{K(\pi_n)} > n^{(\log 8)/2}) \le n^{-(\log 8)/2} \EE 2^{K(\pi_n)} = (n+1)n^{-(\log 8)/2}\le n^{-c}.\]
A similar argument shows $\PP( K(\pi_n)>10 \log n) = O(1/n^6)$, yielding $\EE K^6(\pi_n) \le C (\log n)^6$. Hence, the first sum in \eqref{eq:s2 bnd} is $O(n^{-c})$. To bound the second sum, we partition $I_2$ into intervals of length $\log n/2$:
\begin{equation}\label{eq:blocks}
\begin{split}
\sum_{k \in I_2}  \frac{\PP(X=k-1)k}{(\log n)^2} (r+1)^{Cr} &\le \sum_{j=3}^{\lfloor 2\sqrt{q} \rfloor} \sum_{\substack{k:\\ \frac{2k}{\log n} \in (j, j+1]}}\frac{\PP(X=k-1)k}{(\log n)^2}(r+1)^{Cr}\\
& \le \sum_{j\ge 3} \frac{\PP(X \ge \frac{j}{2} \log n-1) }{\log n} (j+1)^{Cj}.
\end{split}
\end{equation}
By Lemma~\ref{lem:poiss}, the probability in the right-hand side of \eqref{eq:blocks} is bounded by
\begin{equation}
\PP(X \ge \frac{j}{2}\log n-1) \le n^{\frac{j}{2}(1-\log\frac{j}{2})-1}e^{Cj} \le (j+1)^{-cj\log n} n^{-c} e^{Cj},
\end{equation}
where in the last inequality we use the fact that $(j/2)(1-\log(j/2)) -1$ is negative for all $j \ge 3$. Hence,
\begin{equation}
\sum_{k \in I_2}  \frac{\PP(X=k-1)k}{(\log n)^2} (r+1)^{Cr} \le n^{-c}\sum_{j \ge 3} (j+1)^{-cj \log n} (j+1)^{Cj} \le n^{-c} \end{equation}
for sufficiently large $n$. Substituting this bound into \eqref{eq:s2 bnd} we conclude that $S_2 \le 1/(qn^c)$. 

To bound $S_3$, recall that $\Var(K(\pi_n))=\log n + O(1)$ \cite{gontcharoff1942} and that $\Var(\Omega(f_n)) = \log n+O(1)$ (this is a function-field version of the main result of \cite{turan1934}), and both implied constants are absolute. Applying Chebyshev's inequality, we find $\PP( K(\pi_n) \ge \sqrt{q} \log n), \, \PP( \Omega(f_n) \ge \sqrt{q} \log n) \le C/(q \log n)$, and so $S_3 =O(1/(q \log n))$.

We now turn to prove a matching lower bound. Recall we may assume $n \ge C$. We consider the contribution to the total variation coming from $k-\log n \in [1, \sqrt{\log n}]$, which, by Corollary~\ref{cor:pointwise}, is
\begin{equation}\label{eq:sum c}
\sum_{k- \log n \in (0, \sqrt{\log n})}  \PP(K(\pi_n)=k)\left| h_q(r) - 1  \right| + O\left( \frac{1}{q \log n} \right).
\end{equation}
By \eqref{eq:omega perm intro}, $\PP(K(\pi_n)=k) \ge c \PP(X=k-1)$ for $r \le 3/2$. Additionally, $h_q(r) \ge 1+(r-1)/(2q)$ for $r \ge 1$ by \eqref{eq:h lower 2}. Hence, the last sum is bounded from below by
\begin{equation}
\frac{c}{q\log n}\sum_{k-\log n \in [1,\sqrt{\log n}]} \PP(X=k-1) \left| k-1 - \log n\right|.
\end{equation}
By Stirling's approximation, $\PP(X=i+\lfloor \EE X \rfloor ) \ge c/\sqrt{\log n}$ for $i=O(\sqrt{\log n})$, so that the last expression is bounded from below by 
\begin{equation}
\frac{c}{q(\log n)^{3/2}}  \sum_{3 \le i \le \sqrt{\log n}-3} i \ge \frac{c}{q \sqrt{ \log n}}.
\end{equation}
If $n$ is large enough, the error term in \eqref{eq:sum c} is small compared to $c/(q\sqrt{\log n})$, and the lower bound for the total variation follows. \qed 
\bibliographystyle{alpha}
\bibliography{references}

\Addresses

\end{document}